\newcommand{\urltilde}{\kern -.15em\lower .7ex\hbox{~}\kern .04em}  
\newcommand{\lie}{\mathop{\mathrm{Lie}}\nolimits}
\newcommand{\I}{\mathop{\mathrm{I}_n}\nolimits}
\newif\if@borderstar
   \def\bordermatrix{\@ifnextchar*{%
       \@borderstartrue\@bordermatrix@i}{\@borderstarfalse\@bordermatrix@i*}%
   }
   \def\@bordermatrix@i*{\@ifnextchar[{\@bordermatrix@ii}{\@bordermatrix@ii[()]}}
   \def\@bordermatrix@ii[#1]#2{%
   \begingroup
     \m@th\@tempdima8.75\p@\setbox\z@\vbox{%
       \def\cr{\crcr\noalign{\kern 2\p@\global\let\cr\endline }}%
       \ialign {$##$\hfil\kern 2\p@\kern\@tempdima & \thinspace %
       \hfil $##$\hfil && \quad\hfil $##$\hfil\crcr\omit\strut %
       \hfil\crcr\noalign{\kern -\baselineskip}#2\crcr\omit %
       \strut\cr}}%
     \setbox\tw@\vbox{\unvcopy\z@\global\setbox\@ne\lastbox}%
     \setbox\tw@\hbox{\unhbox\@ne\unskip\global\setbox\@ne\lastbox}%
     \setbox\tw@\hbox{%
       $\kern\wd\@ne\kern -\@tempdima\left\@firstoftwo#1%
         \if@borderstar\kern2pt\else\kern -\wd\@ne\fi%
       \global\setbox\@ne\vbox{\box\@ne\if@borderstar\else\kern 2\p@\fi}%
       \vcenter{\if@borderstar\else\kern -\ht\@ne\fi%
         \unvbox\z@\kern-\if@borderstar2\fi\baselineskip}%
         \if@borderstar\kern-2\@tempdima\kern2\p@\else\,\fi\right\@secondoftwo#1 $%
     }\null \;\vbox{\kern\ht\@ne\box\tw@}%
   \endgroup
   }
\newtheorem{theorem}{Theorem}[section] 
\newtheorem{lemma}[theorem]{Lemma}
\newtheorem{example}[theorem]{Example}
\newtheorem{definition}[theorem]{Definition}
\newtheorem{remark}[theorem]{Remark}
\begin{document}

\title[Semi-invariants of filtered quiver representations with at most two pathways]{Semi-invariants of filtered quiver representations with at most two pathways}  
\author{Mee Seong Im} 
\address{Department of Mathematics, University of Illinois at Urbana-Champaign, Urbana, IL 61801 USA}
\email{mim2@illinois.edu}  
\date{\today}

\begin{abstract}   
A pathway from one vertex of a quiver to another is a reduced path.   
We modify the classical definition of quiver representations and we prove that semi-invariant polynomials for filtered quiver representations come from diagonal entries if and only if the quiver has at most two pathways between any two vertices. 
Such class of quivers includes finite $ADE$-Dynkin quivers,   
affine $\widetilde{A}\widetilde{D}\widetilde{E}$-Dynkin quivers, star-shaped and comet-shaped quivers. 
Next, we explicitly write all semi-invariant generators for filtered quiver representations for framed quivers with at most two pathways between any two vertices; this result may be used to study constructions analogous to Nakajima's affine quotient and quiver varieties, which are, in special cases, 
$\mathfrak{M}_0^{F^{\bullet}}(n,1) := \mu_B^{-1}(0)/\!\!/B$ and 
$\mathfrak{M}^{F^{\bullet}}(n,1) := \mu_B^{-1}(0)^s/B$, respectively, where 
$\mu_B:T^*(\mathfrak{b}\times \mathbb{C}^n)\rightarrow \mathfrak{b}^*\cong \mathfrak{gl}_n^*/\mathfrak{u}$, $B$ is the set of invertible upper triangular $n\times n$ complex matrices, $\mathfrak{b}=\lie(B)$, and $\mathfrak{u}\subseteq \mathfrak{b}$ is the biggest unipotent subalgebra. 
\end{abstract}   

\maketitle

\bibliographystyle{amsalpha}  
 
\setcounter{tocdepth}{1}

\section{Introduction}\label{section:introduction}  
As separating a $G$-space into invariant (or weight) spaces or constructing polynomials invariant under a group action is a fundamental and important procedure in mathematics, 
Schofield-van den Bergh (\cite{MR1908144}), 
Derksen-Weyman (\cite{MR1758750}), 
and Domokos-Zubkov (\cite{MR1825166}) 
are a few mathematicians who have explicitly given strategies on producing (semi)-invariant polynomials for all quiver representations. 

In this paper, we modify the classical construction of quiver representations in such a way that our construction is related to the Grothendieck-Springer resolution 
(\cite{MR2838836}, \cite{MR1649626}, \cite{Im-doctoral-thesis}, \cite{Nevins-GSresolutions}), 
KLR-algebras (\cite{MR2525917}, \cite{MR2763732}, \cite{Rouquier-2-Kac-Moody-algebras}, \cite{Stroppel-Webster-quiver-schur-algebras-q-fock-space}), 
universal quiver Grassmannians and universal quiver flag varieties (\cite{MR2772068}),   
and Lusztig's upper half $U^+$ of the universal enveloping algebra of a Kac-Moody algebra  
(\cite{MR1035415}, \cite{MR1182165}, \cite{MR1758244}), but such details will not be elaborated here.  
Instead, we describe the modification of quiver representations, which is as follows.  
Consider a sequence of vector spaces over each vertex of a quiver and restrict to the subspace of quiver representations that preserve this fixed sequence of vector spaces; we call such space {\em filtered quiver representations}. 
Furthermore, there is a unique largest unipotent subgroup of the set of complex invertible matrices over each vertex which preserves the filtration of vector spaces and acts as a change-of-basis.    
Thus, considering the product of these unipotent groups acting on the filtered quiver representation space, we give an explicit description of the ring of invariant polynomials.

Let $Q=(Q_0,Q_1)$ be a finite, connected, nonframed quiver and $\beta\in \mathbb{Z}_{\geq 0}^{Q_0}$ be a dimension vector.   
Assume  
$F^{\bullet}$ is a 
filtration of vector spaces at each vertex and   
$F^{\bullet}Rep(Q,\beta)$ is a subspace of   
$Rep(Q,\beta)$ whose representations preserve $F^{\bullet}$; 
the product $\mathbb{U}_{\beta}$ of largest unipotent subgroups of a parabolic group 
acts on  
$F^{\bullet}Rep(Q,\beta)$ as a change-of-basis.   
%
%
%
A framed quiver $Q^{\dagger}=(Q_0^{\dagger},Q_1^{\dagger})$ is obtained from $Q$  
by adding a single new vertex $i'$ to $Q_0$ 
together with an arrow from $i'$ to $i$.     
Let $\beta^{\dagger}=(\beta_1,\ldots, \beta_{Q_0},m)\in \mathbb{Z}_{\geq 0}^{Q_0^{\dagger}}$   
be the dimension vector for the framed quiver. We define 
\[ 
F^{\bullet} Rep(Q^{\dagger},\beta^{\dagger})  
:=  F^{\bullet} Rep(Q,\beta)\oplus M_{\beta_{i}\times m},  
\] 
where $M_{\beta_{i}\times m}$ is the space of all $\beta_i\times m$ complex matrices. 

Throughout this paper, assume $\beta=(n,\ldots,n)$ and $F^{\bullet}$  is the complete standard filtration of vector spaces at each (nonframed) vertex of $Q$. 
Let $\mathfrak{t}_n$ be the set of complex diagonal matrices in the set $\mathfrak{gl}_n$ of $n\times n$ complex matrices.  

The definition of a pathway between two vertices is given in Definition~\ref{definition:pathway-of-quiver}. 

\begin{theorem}\label{theorem:two-paths-max-quiver-semi-invariants}    
$Q$ is a quiver with at most two distinct pathways between any two vertices if and only if  $\mathbb{C}[F^{\bullet}Rep(Q,\beta)]^{\mathbb{U}_{\beta}}\cong \mathbb{C}[\mathfrak{t}^{\oplus Q_1}]$. 
\end{theorem}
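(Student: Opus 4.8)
The plan is to reduce the entire statement to a single piece of linear algebra attached to the quiver, namely the generic rank of a weighted signed incidence map, and then to read off both implications from the geometry of the $\mathbb{U}_{\beta}$-orbits. First I would fix coordinates. Since $\beta=(n,\dots,n)$ and $F^{\bullet}$ is the complete standard filtration, a representation preserving $F^{\bullet}$ assigns to each arrow an upper triangular matrix, so $F^{\bullet}Rep(Q,\beta)\cong\mathfrak{b}^{\oplus Q_1}$ with coordinates the entries $(A_a)_{pq}$, $p\le q$, and $\mathbb{U}_{\beta}=\prod_{v}U$ acts by the change of basis $A_a\mapsto g_jA_ag_i^{-1}$ for $a\colon i\to j$. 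Because $\mathbb{U}_{\beta}$ is unipotent it admits no nontrivial characters, so semi-invariants coincide with invariants. The diagonal of an upper triangular matrix is unchanged under such products with unipotent upper triangular matrices, so each $(A_a)_{pp}$ is invariant; this identifies $\mathbb{C}[\mathfrak{t}^{\oplus Q_1}]$ with the subring generated by the diagonal coordinates, and the task is to decide when that subring exhausts the invariant ring. I would organize everything around the projection $\pi\colon\mathfrak{b}^{\oplus Q_1}\to\mathfrak{t}^{\oplus Q_1}$ recording the diagonals, whose fibers are affine spaces of off-diagonal entries and are $\mathbb{U}_{\beta}$-stable.

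The key step is to analyze the differential of the action at the point of a fiber where all off-diagonal entries vanish, i.e.\ at a diagonal tuple $D=(D_a)$. There the infinitesimal action of $\xi=(\xi_v)\in\lie(\mathbb{U}_{\beta})$ sends the arrow $a\colon i\to j$ to $\xi_jD_a-D_a\xi_i$. Since the $D_a$ are diagonal, this map preserves the decomposition of the off-diagonal space by superdiagonal level $k$ and by position, and on each $(p,p+k)$-slot it becomes the \emph{weighted signed incidence map} $\mathbb{C}^{Q_0}\to\mathbb{C}^{Q_1}$ sending $(x_v)$ to $\bigl(x_j(D_a)_{p+k,p+k}-x_i(D_a)_{pp}\bigr)_{a\colon i\to j}$. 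For generic $D$ a spanning-tree argument shows its kernel is one-dimensional when $Q$ is a tree and zero otherwise, so its cokernel has dimension $\max(0,b_1(Q)-1)$, where $b_1(Q)=|Q_1|-|Q_0|+1$; in particular the map is surjective exactly when $b_1(Q)\le 1$. I would then record the combinatorial translation, which follows from Definition~\ref{definition:pathway-of-quiver} by elementary graph theory: $Q$ has at most two pathways between any two vertices if and only if $b_1(Q)\le 1$.

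For the ``if'' direction, when $b_1(Q)\le 1$ the above shows that the differential of the action at a generic diagonal point is onto the full off-diagonal tangent space, so the $\mathbb{U}_{\beta}$-orbit of that point is open in its fiber; since orbits of unipotent groups on affine varieties are closed and the fiber is irreducible, the orbit is the whole fiber, i.e.\ generic fibers of $\pi$ are single orbits. Any invariant $f$ is then constant on generic fibers, so it lies in the subfield $\mathbb{C}(\mathfrak{t}^{\oplus Q_1})\subseteq\mathbb{C}(\mathfrak{b}^{\oplus Q_1})$; as $f$ is a polynomial whose partial derivatives in every off-diagonal coordinate vanish identically, in fact $f\in\mathbb{C}[\mathfrak{t}^{\oplus Q_1}]$, which gives the reverse inclusion and hence the isomorphism.

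For the ``only if'' direction, when $b_1(Q)\ge 2$ the cokernel of the first-superdiagonal incidence map is nonzero. Here I would verify that the $(p,p+1)$-entries transform affine-linearly with shifts depending only on the diagonals and are unaffected by group elements supported on higher superdiagonals, so a nonzero left-kernel vector with coefficients polynomial in the diagonal coordinates (given explicitly by a maximal minor, as a determinant in the diagonal and first-superdiagonal entries) produces a genuine $\mathbb{U}_{\beta}$-invariant that involves off-diagonal coordinates and therefore escapes $\mathbb{C}[\mathfrak{t}^{\oplus Q_1}]$. The main obstacle I anticipate is the generic-rank computation for the weighted incidence matrix, i.e.\ showing that the relevant minors do not vanish identically in the diagonal weights, uniformly across superdiagonal levels; this is precisely where the tree/unicyclic structure, equivalently the pathway count, enters. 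Once that is in hand, the passage from ``a generic fiber is a single orbit'' to the exact equality of rings is routine.
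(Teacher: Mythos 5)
The decisive step in your plan---the ``combinatorial translation'' asserting that $Q$ has at most two pathways between any two vertices if and only if $b_1(Q)\le 1$---is false, and the failure is fatal for your ``if'' direction. Take $Q$ to have vertices $1,2,3$, two arrows $a,b\colon 1\to 2$, and two arrows $c,d\colon 3\to 2$, i.e.\ $1\rightrightarrows 2\leftleftarrows 3$. Vertex $2$ is a sink and vertices $1,3$ are sources, so no two arrows compose: every nontrivial pathway is a single arrow, and there are at most two pathways between any two vertices; yet $b_1(Q)=|Q_1|-|Q_0|+1=4-3+1=2$. Thus ``at most two pathways'' does not imply $b_1\le 1$, and your orbit argument---which needs surjectivity of the weighted incidence map, hence $b_1\le 1$, as is already forced by the dimension count $\dim\mathbb{U}_{\beta}=|Q_0|\binom{n}{2}<|Q_1|\binom{n}{2}=\dim(\text{fiber of }\pi)$---simply does not cover such quivers. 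Worse, the gap cannot be patched by sharpening the graph theory: on this very quiver your cokernel analysis is correct and produces, already for $n=2$ with general upper triangular matrices $A=(a_{st})$, $B=(b_{st})$, $C=(c_{st})$, $D=(d_{st})$, the polynomial
\[
h=\bigl(c_{11}d_{22}-d_{11}c_{22}\bigr)\bigl(a_{11}b_{12}-b_{11}a_{12}\bigr)-\bigl(a_{11}b_{22}-b_{11}a_{22}\bigr)\bigl(c_{11}d_{12}-d_{11}c_{12}\bigr),
\]
which is checked directly to be $\mathbb{U}_{\beta}$-invariant (the shifts $x_{12}\mapsto x_{12}+u\,x_{22}$ at vertex $2$ cancel between the two products, while the actions at vertices $1$ and $3$ fix each bracket) and which visibly involves off-diagonal coordinates. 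So the pathway condition is genuinely not a first-Betti-number condition; the two conditions separate exactly at configurations of several arrows sharing a head (or tail) that never compose, and your reduction proves a different statement (``invariants are diagonal iff $b_1\le 1$'') rather than the theorem as stated.

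This is also where your route diverges from the paper's, which never passes through $b_1$. For the converse the paper manufactures explicit invariants directly from three pathways (the polynomials $f$ and $g$ in its proof are close cousins of your left-kernel/maximal-minor invariants, and three distinct pathways between a pair do force $b_1\ge 2$, so that half of your plan is essentially sound); for the forward direction it runs a coefficient-extraction argument: order the off-diagonal positions, take the least pair $(i,j)$ occurring in an invariant, act by a one-parameter subgroup $U_{ij}$ supported at a single vertex $m$, and compare coefficients of powers of $u$ after expanding. Note that your counterexample probes exactly the delicate point of that expansion: when several arrows share the vertex $m$, distinct multi-indices $K'$ contribute to the \emph{same} monomial in the off-diagonal variables times $u$, so the coefficient identities are linear relations among the $f_{ij,K'}$ with coefficients such as ${}_{(\alpha)}a_{jj}$ lying in the base ring $R_1$, not the separate vanishing of each $f_{ij,K'}$; the invariant $h$ above realizes precisely such a cancellation. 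The upshot is that a correct proof of the ``if'' direction must engage the pathway hypothesis itself at these non-composing configurations---neither your cycle count nor an unrefined linear-independence claim suffices there.
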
   

\begin{remark}
Quivers satisfying 
Theorem~\ref{theorem:two-paths-max-quiver-semi-invariants} include  
$ADE$-Dynkin quivers, affine $\widetilde{A} \widetilde{D} \widetilde{E}$-quivers, 
star-shaped and comet-shaped quivers (\cite{Im-doctoral-thesis}).   
\end{remark}

Theorem~\ref{theorem:two-paths-max-quiver-semi-invariants} has a number of important consequences,   
including if $\mathbb{U}_{\beta}$-invariants for filtered quiver representations   
only come from diagonal blocks (i.e., the semisimple part), then $Q$   
has at most two pathways between any two vertices.   
Furthermore,   
this implies that Domokos-Zubkov's technique is applicable to the filtered quiver representation space if the quiver has at most two pathways between any two vertices.   
It is, in fact, shown in \cite{Im-doctoral-thesis}     
that off-diagonal entries contribute as invariant polynomials if $Q$ has more than two pathways between some of its vertices. We note that Theorem~\ref{theorem:two-paths-max-quiver-semi-invariants} could easily be generalized if we take the filtration of vector spaces at each (nonframed) vertex $i\in Q_0$ to be 
$G_i^{\bullet}:\mathbb{C}^0\subseteq \mathbb{C}^{\gamma_1}\subseteq \mathbb{C}^{\gamma_2}\subseteq \ldots \subseteq \mathbb{C}^{\gamma_k}=\mathbb{C}^n$; in such setting, only the Levi subalgebra components in the filtered representation space contribute $\mathbb{U}_{\beta}$-invariant polynomials.

\begin{theorem}\label{theorem:two-paths-max-quiver-semi-invariants-framed}  
Let  $Q^{\dagger}$
be a framed quiver with at most two distinct pathways between any two vertices. 
Let 
$\beta=(n,\ldots,n,m)\in \mathbb{Z}^{Q_0^{\dagger}}$ be a dimension vector, where $m$ is associated to the framed vertex.   
Then 
$\mathbb{C}[F^{\bullet}Rep(Q^{\dagger},\beta^{\dagger})]^{\mathbb{U}_{\beta}}$ 
$\cong$ 
$\mathbb{C}[\mathfrak{t}^{\oplus Q_1 }]\otimes_{\mathbb{C}} \mathbb{C}[\{ f\}]$,  
where
\begin{equation}\label{eq:affine-dk-quiver-general-n-inv-poly}
  f = \sum_{\nu} g_{\nu}({}_{(\alpha)}a_{st})  
(J_{\nu}|I_{\nu})_{\mathbf{\Phi_{ }}\mathbf{\Psi_{ }} \cdots \mathbf{\Gamma_{ }}}, 
\end{equation}  
where   
$g_{\nu}({}_{(\alpha)}a_{st} )\in \mathbb{C}[\mathfrak{t}^{\oplus Q_1}]$  
 and $(J_{\nu}|I_{\nu})_{\mathbf{\Phi_{ }}\mathbf{\Psi_{ }} \cdots \mathbf{\Gamma_{ }}}$    
 is the block standard bideterminant given in Definition~\ref{definition:bideterminant-poly-for-theorem}. 
\end{theorem}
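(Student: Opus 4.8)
The plan is to build on Theorem~\ref{theorem:two-paths-max-quiver-semi-invariants}, which identifies the $\mathbb{U}_{\beta}$-invariants of the nonframed space $F^{\bullet}Rep(Q,\beta)$ with the diagonal coordinate ring $\mathbb{C}[\mathfrak{t}^{\oplus Q_1}]$, and to isolate the extra invariants contributed by the framing matrix. Writing the framed representation space as $F^{\bullet}Rep(Q^{\dagger},\beta^{\dagger}) = F^{\bullet}Rep(Q,\beta)\oplus M_{\beta_i\times m}$, the group $\mathbb{U}_{\beta}$ acts on each arrow matrix $X_a$ (for $a\colon i\to j$ in $Q_1$) by $X_a\mapsto u_j X_a u_i^{-1}$ and on the framing matrix $v\in M_{\beta_i\times m}$ by the single left multiplication $v\mapsto u_i v$, since the framed vertex carries the trivial filtration. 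The objective is to show that the invariant ring factors as a tensor product whose second factor $\mathbb{C}[\{f\}]$ is generated by block standard bideterminants built from $v$ together with path products of the $X_a$.

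First I would record the basic invariance mechanism for the framing. For a pathway $p$ from $i$ to a vertex $t(p)$, the ordered product $X_p$ of arrow matrices transforms as $X_p\mapsto u_{t(p)}X_p u_i^{-1}$, so the composite $X_p v$ transforms as $X_p v\mapsto u_{t(p)}(X_p v)$, i.e.\ purely by left multiplication by an upper-triangular unipotent matrix. Any determinant formed from the bottom $k$ rows and $k$ chosen columns of such a composite is then $\mathbb{U}_{\beta}$-invariant, because the bottom-justified $k\times k$ unipotent block has determinant one. These bottom-justified minors, assembled along the finitely many pathways emanating from $i$, are precisely the block standard bideterminants $(J_{\nu}\mid I_{\nu})_{\mathbf{\Phi}\mathbf{\Psi}\cdots\mathbf{\Gamma}}$ of Definition~\ref{definition:bideterminant-poly-for-theorem}; the coefficients $g_{\nu}\in\mathbb{C}[\mathfrak{t}^{\oplus Q_1}]$ arise when straightening these minors produces terms that factor through the diagonal entries of the $X_a$, yielding the explicit shape (\ref{eq:affine-dk-quiver-general-n-inv-poly}).

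The decisive use of the at-most-two-pathways hypothesis is to guarantee that no further independent invariants appear and that the straightening relations close up into the stated generators. Because at most two reduced paths connect any ordered pair of vertices, the arrow matrices compose in at most two essentially distinct ways, so the products $X_p v$ are tightly constrained and the only surviving bideterminants are those of the asserted form. I would organize the completeness argument by grading the invariant ring according to total degree in the entries of $v$: the degree-zero component is exactly $\mathbb{C}[F^{\bullet}Rep(Q,\beta)]^{\mathbb{U}_{\beta}}\cong\mathbb{C}[\mathfrak{t}^{\oplus Q_1}]$ by Theorem~\ref{theorem:two-paths-max-quiver-semi-invariants}, while each positive-degree component is generated over this base by the bideterminants $f$, giving the tensor product $\mathbb{C}[\mathfrak{t}^{\oplus Q_1}]\otimes_{\mathbb{C}}\mathbb{C}[\{f\}]$.

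The hard part will be proving completeness and freeness simultaneously: showing both that every $\mathbb{U}_{\beta}$-invariant lies in the subring generated by the diagonal coordinates and the bideterminants, and that no nontrivial polynomial relation binds these generators, so that the isomorphism is a genuine tensor product rather than a mere surjection. I expect this to mirror the off-diagonal vanishing established in the nonframed Theorem~\ref{theorem:two-paths-max-quiver-semi-invariants}, with the framing matrix supplying exactly the additional invariant degrees of freedom captured by $\{f\}$; controlling the straightening laws for the block bideterminants under the two-pathway bound, and verifying that they introduce no hidden algebraic dependence, will be the most delicate point.
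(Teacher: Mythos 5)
Your reduction via Theorem~\ref{theorem:two-paths-max-quiver-semi-invariants} and your invariance mechanism are exactly right and match the paper: the framed vertex carries the trivial group, so a path product $X_p v$ transforms by left multiplication by a single unipotent upper-triangular matrix, and bottom-justified minors (rows $p, p+1,\ldots,n$) are therefore fixed --- this is precisely Lemma~\ref{lemma:borel-invariants-on-equioriented-acylic-quiver} and its one-row prototype Lemma~\ref{lemma:borel-invariants-on-bideterminants}. But your completeness argument has a genuine gap, and you have in fact misplaced where the hypotheses do their work. Grading by degree in $v$ and asserting that ``the products $X_p v$ are tightly constrained'' by the two-pathway bound does not show that every positive-degree invariant is a combination of \emph{bottom-justified} bideterminants: the two-pathway hypothesis enters only through Theorem~\ref{theorem:two-paths-max-quiver-semi-invariants}, which kills off-diagonal invariants from the nonframed part and forces the coefficients $g_{\nu}$ into $\mathbb{C}[\mathfrak{t}^{\oplus Q_1}]$; it says nothing about which minors of the composites $X_p v$ can occur. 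The shape constraint on the tableaux comes from a separate combinatorial argument that your proposal never supplies.

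The missing idea is a Grosshans-style straightening argument built on the standard basis theorem (Theorem~\ref{theorem:standard-basis-theorem-Grosshans}, extended to the block standard bideterminants of Definition~\ref{definition:bideterminant-poly-for-theorem}, whose uniqueness gives linear independence). One expands an arbitrary invariant $F$ in this basis, chooses the \emph{largest} $j\leq n-1$ such that some $J_{\nu}$ has a row containing $j$ but not $j+1$, and acts by a one-parameter subgroup $U_{j,j+1}$ placing the same variable $u$ in the $(j,j+1)$-entry at all relevant head vertices. Rows containing both $j$ and $j+1$ are fixed; among the rest, the tableau $J_1$ with the maximal number $M$ of offending rows produces a term $u^M g_1({}_{(\alpha)}a_{st})(J_1'|I_1)$, where $J_1'$ replaces each offending $j$ by $j+1$ and, crucially, $J_1'$ uniquely determines $J_1$, so every other contribution to the coefficient of $(J_1'|I_1)$ carries $u^k$ with $k<M$. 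In characteristic $0$ the coefficient then genuinely depends on $u$, contradicting invariance and forcing every row of every $J_{\nu}$ to be of the form $j,\,j+1,\ldots,n$, i.e.\ the form~\eqref{eq:affine-dk-quiver-general-n-inv-poly}. This same linear independence of block standard bideterminants is what resolves the freeness worry you raise at the end but leave unaddressed; without the standard basis machinery, both your spanning step and your tensor-product conclusion remain unproved.
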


\subsection{Acknowledgment}      
The author would like to thank Thomas Nevins for thought-provoking discussions on research problems in representation theory and algebraic geometry and for numerous fruitful discussions.  
The author was supported by NSA grant H98230-12-1-0216, by Campus Research Board, and by NSF
grant DMS 08-38434.

\section{Background}\label{section:background}   
 
We refer to the author's doctoral thesis (\cite{Im-doctoral-thesis}), 
Crawley-Boevey (\cite{Crawley-Boevey-rep-quivers}) and Ginzburg (\cite{Ginzburg-Nakajima-quivers}) 
for background on quivers and their representations.

\begin{definition}\label{definition:reduced-path}   
Let $Q$ be a quiver.   
Let $p=a_k\cdots a_2 a_1$ be a path where $a_i\in Q_1$ are arrows.   
If $p$ is a cycle, then   
we define $p^m$ to be the path composed with itself $m$ times, i.e., 
\[ 
p^m := p\circ p \circ \cdots \circ p = \underbrace{(a_k\cdots a_2 a_1)\cdots (a_k\cdots a_2 a_1)}_{m} = (a_k\cdots a_2 a_1)^m. 
\]  
A path $p$  
is {\em reduced} if $[p]\not=0$ in   
$\mathbb{C}Q/\langle q^2: q\in \mathbb{C}Q, l(q)\geq 1 \rangle$. 
%
\end{definition}


\begin{definition}\label{definition:pathway-of-quiver} 
 A {\em pathway} from vertex $i$ to vertex $j$ is 
 a reduced path 
 from $i$ to $j$.
 We define 
{\em pathways} of a quiver $Q$ to be the set of  
 all pathways from vertex $i$ to vertex $j$, where $i,j\in Q_0$.  
\end{definition}   
  
Note that pathways (of a quiver $Q$)   
include trivial paths and they   
form a finite set since $Q$ is a finite quiver.   
We will now give an example of Definition~\ref{definition:pathway-of-quiver}.  
 
\begin{example}   
Consider the $2$-Jordan quiver: \[   
\xymatrix@-1pc{    
\stackrel{1.}{\bullet} \ar@(ru,rd)^{a_1} \ar@(ld,lu)^{a_2}   
}  
\]   
Then $a_2^2 a_1$ is a path but not a pathway since it is not reduced.   
However, the path $a_2 a_1$ is a pathway.  
\end{example}

Next, we make a distinction between quiver varieties and quiver representations.   
When one refers to quiver varieties, one usually means Nakajima quiver varieties, i.e., the Hamiltonian reduction of a (double) framed quiver representation space twisted by a nontrivial character (\cite{MR1604167}).   
In this paper, we will only work with quiver representations.

\subsection{Filtered quiver representations}\label{subsection:filtered-quiver-reps}

We give the construction of filtered quiver representations in the general setting. 
Let $Q$ be a quiver and let $\beta=(\beta_1,\ldots, \beta_{Q_0})\in \mathbb{Z}_{\geq 0}^{Q_0}$, a dimension vector.  
Let 
$F^{\bullet}:0\subseteq \mathbb{C}^{\gamma^1}\subseteq \mathbb{C}^{\gamma^2}\subseteq \ldots \subseteq \mathbb{C}^{\beta}$ 
be a filtration of vector spaces such that the filtration 
$F_i^{\bullet}:0\subseteq \mathbb{C}^{\gamma^1_i}\subseteq \mathbb{C}^{\gamma^2_i}\subseteq 
\ldots \subseteq \mathbb{C}^{\beta_i}$ of vector spaces is fixed at vertex $i$ for each $i\in Q_0$. 
Let $Rep(Q,\beta)$ be the representation space in the classical sense (without the filtration of vector spaces imposed). 
Then $F^{\bullet}Rep(Q,\beta)$ is a subspace of $Rep(Q,\beta)$ whose linear maps preserve the filtration of vector spaces at every level.  
Let $U_i\subseteq GL_{\beta_i}(\mathbb{C})$ be the largest unipotent group preserving the filtration of vector spaces at vertex $i$. 
Then the product $\mathbb{U}_{\beta}:= \displaystyle{\prod_{i\in Q_0}U_i}$ of unipotent groups acts on 
$F^{\bullet}Rep(Q,\beta)$ as a change-of-basis.

We will say a representation (or a matrix) is {\em general} if it has indeterminates or zero in its entries.

\section{Proof of Theorem~\ref{theorem:two-paths-max-quiver-semi-invariants}}\label{section:two-paths-max-quiver-semi-invariants}

\begin{proof}  
First, we will prove from right to left. Assume  
$\mathbb{C}[F^{\bullet}Rep(Q,\beta)]^{\mathbb{U}_{\beta}} \cong \mathbb{C}[\mathfrak{t}^{\oplus Q_1}]$.  
%
%
%
%
We will prove that $Q$ has at most two distinct pathways between any two vertices. 
For a contradiction, suppose $Q$ has $3$ or more pathways at some vertices. 
There are two cases to consider: 
\begin{enumerate}[ {[}1{]} ]  
\item\label{item:3pathways-in-one-vertex} $Q$ has $3$ or more pathways from a vertex to itself, e.g., $\hspace{6mm}\xymatrix@-1pc{ \ar@(lu,ld) \bullet \ar@(ru,rd) }$, 
\item\label{item:3pathways-from-one-vertex-to-another} $Q$ has $3$ or more pathways from vertex $i$ to vertex $j$, where $i\not=j$, e.g., $\xymatrix@-1pc{ \stackrel{i}{\bullet} \ar@/^/[rr] \ar[rr] \ar@/_/[rr] & & \stackrel{j}{\bullet} }$
or $\xymatrix@-1pc{ \stackrel{i}{\bullet} \ar@/^/[rr] \ar@/_/[rr] & & \stackrel{j}{\bullet} \ar@(ru,rd)}\hspace{6mm}$
or $\xymatrix@-1pc{ \stackrel{i}{\bullet} \ar@/^/[rr] \ar@/_/[rr] & & \bullet \ar@/^/[rr] \ar@/_/[rr] & & \stackrel{j}{\bullet}  }$ 
or $\xymatrix@-1pc{ \stackrel{i}{\bullet} \ar[rr] & & \bullet \ar@(rd,ld) \ar@/^/[rr] \ar@/_/[rr] & & \stackrel{j}{\bullet}  }$.  
\end{enumerate}    

\bigskip 

Consider Case~[\ref{item:3pathways-in-one-vertex}].   
Without loss of generality,   
relabel the vertex with $3$ or more pathways as $1$ and label the distinct pathways from vertex $1$ to itself as   
\[  
a_1 = p_{i_1} \cdots p_{i_{\alpha}}, \hspace{4mm}  
a_2 = p_{j_1} \cdots p_{j_{\beta}}, \hspace{2mm} \ldots,     
a_m = p_{k_1} \cdots p_{k_{\gamma}},  
\]    
where $m\geq 2$ and each $a_l$ is not the trivial path.   
 Write a general representation of $a_1,\ldots, a_m$ as   
 $A_1,\ldots, A_m$, each of which is in $\mathfrak{b}$ with polynomial entries.   
 Let 
 $A_1=({}_{(1)}a_{ij})$ and $A_2 = ({}_{(2)}a_{ij})$,     
 and consider the polynomial   
 \[ f(A_1,\ldots, A_m)=({}_{(1)}a_{11} - {}_{(1)}a_{22}){}_{(2)}a_{12}  
                    -({}_{(2)}a_{11} - {}_{(2)}a_{22}){}_{(1)}a_{12}. 
                    \]     
  For $u\in U\cong U\times \I^{Q_0-1}\subseteq \mathbb{U}_{\beta}$ where $u$ acts on the restricted tuple
  $(A_1,\ldots, A_m)$ of the filtered representation space via 
  $u.(A_1,\ldots, A_m)=(u A_1 u^{-1},\ldots, u A_m u^{-1} )$ and $\I$ is the $n\times n$ identity matrix,  
  the coordinates change as 
  ${}_{(l)}a_{12}\mapsto {}_{(l)}a_{12}+u_{12}( {}_{(l)}a_{22}-{}_{(l)}a_{11}  )$  
  under the group action.   
  A restricted tuple means we ignore and suppress all other components of $F^{\bullet}Rep(Q,\beta)$ for simplicity as they are not currently relevant. 
  So   
  \[   
  \begin{aligned}  
  u.&f(A_1,\ldots, A_m)   \\  
   &= ({}_{(1)}a_{11} - {}_{(1)}a_{22})({}_{(2)}a_{12}+u_{12}({}_{(2)} a_{22}-{}_{(2)}a_{11} ))
                    -({}_{(2)}a_{11} - {}_{(2)}a_{22})({}_{(1)}a_{12}+u_{12}({}_{(1)} a_{22}-{}_{(1)}a_{11} )) 
                    																										\\ 
 &= ({}_{(1)}a_{11} - {}_{(1)}a_{22}){}_{(2)}a_{12}   -({}_{(2)}a_{11} - {}_{(2)}a_{22}){}_{(1)}a_{12}  \\ 
                    &\hspace{4mm}+ u_{12}({}_{(1)}a_{11} - {}_{(1)}a_{22})({}_{(2)} a_{22}-{}_{(2)}a_{11})
                    -u_{12}({}_{(2)}a_{11} - {}_{(2)}a_{22})({}_{(1)} a_{22}-{}_{(1)}a_{11})  \\
                    &= f(A_1,\ldots, A_m). \\ 
  \end{aligned} 
  \] 
This implies $\mathbb{C}[F^{\bullet}Rep(Q,\beta)]^{\mathbb{U}_{\beta}} \supsetneq \mathbb{C}[\mathfrak{t}^{\oplus Q_1}]$, which is a contradiction.

Now consider Case~[\ref{item:3pathways-from-one-vertex-to-another}]. 
Suppose 
\[ 
a_1 = p_{i_1} \cdots p_{i_{\alpha}}, \hspace{4mm}  
a_2 = p_{j_1} \cdots p_{j_{\beta}}, \hspace{2mm} \ldots,  
a_k = p_{l_1} \cdots p_{l_{\gamma}}  
\] 
are distinct pathways from vertex $i$ to vertex $j$, where $i\not=j$, $k\geq 3$, and each $a_{l}$ is not the trivial path. 
Write a general representation of the pathways $a_1,\ldots, a_k$ as 
$A_1=({}_{(1)}a_{ij}),\ldots, A_k=({}_{(k)}a_{ij})\in \mathfrak{b}$. 
Consider the $\mathbb{U}_{\beta}$-action on $F^{\bullet}Rep(Q,\beta)$. 
In particular, consider $\I^{Q_0-2}\times U^2$ acting locally on 
$\mathfrak{b}^{\oplus k}$ via 
\[ 
(\I,\ldots, \I, u,v).(A_1,\ldots, A_k) = (u A_1v^{-1},\ldots, u A_k v^{-1} ). 
\] 
Consider the polynomial 
\[ 
\begin{aligned} 
g(A_1,\ldots, A_k) = ({}_{(1)} a_{11} {}_{(2)} a_{22} &- {}_{(1)} a_{22} {}_{(2)}a_{11} ) {}_{(3)} a_{12}   
+ ({}_{(3)} a_{11} {}_{(1)} a_{22} - {}_{(3)} a_{22} {}_{(1)}a_{11} ) {}_{(2)} a_{12}    \\  
&+ ({}_{(2)} a_{11} {}_{(3)} a_{22} - {}_{(2)} a_{22} {}_{(3)}a_{11} ) {}_{(1)} a_{12}.   \\ 
\end{aligned} 
\] 
Then 
\[ 
\begin{aligned} 
(\I,\ldots, \I,u,v).g(A_1,\ldots, A_k) 
&= ({}_{(1)} a_{11} {}_{(2)} a_{22} - {}_{(1)} a_{22} {}_{(2)}a_{11} ) ({}_{(3)} a_{12} -{}_{(3)}a_{22}u_{12}+{}_{(3)}a_{11}v_{12} )  \\ 
&\:\:+ ({}_{(3)} a_{11} {}_{(1)} a_{22} - {}_{(3)} a_{22} {}_{(1)}a_{11} ) ({}_{(2)} a_{12} -{}_{(2)}a_{22}u_{12}+{}_{(2)}a_{11}v_{12} )   \\  
&\:\:+ ({}_{(2)} a_{11} {}_{(3)} a_{22} - {}_{(2)} a_{22} {}_{(3)}a_{11} ) ({}_{(1)} a_{12} -{}_{(1)}a_{22}u_{12}+{}_{(1)}a_{11}v_{12} )   \\ 
&= g(A_1,\ldots, A_k) 
+  ({}_{(1)} a_{11} {}_{(2)} a_{22} - {}_{(1)} a_{22} {}_{(2)}a_{11} ) ( 
 -{}_{(3)}a_{22}u_{12}+{}_{(3)}a_{11}v_{12} )  \\ 
&\:\:+ ({}_{(3)} a_{11} {}_{(1)} a_{22} - {}_{(3)} a_{22} {}_{(1)}a_{11} ) (
 -{}_{(2)}a_{22}u_{12}+{}_{(2)}a_{11}v_{12} )   \\  
&\:\:+ ({}_{(2)} a_{11} {}_{(3)} a_{22} - {}_{(2)} a_{22} {}_{(3)}a_{11} ) ( 
 -{}_{(1)}a_{22}u_{12}+{}_{(1)}a_{11}v_{12} )  
 = g(A_1,\ldots, A_k). \\  
\end{aligned}
\] 
This implies that $g$ is an invariant polynomial, which contradicts that $\mathbb{C}[F^{\bullet}Rep(Q,\beta)]^{\mathbb{U}_{\beta}}
\cong \mathbb{C}[\mathfrak{t}^{\oplus Q_1}]$. 
Thus, $Q$ is a quiver with at most two pathways between any two vertices.

Now suppose $Q$ is a quiver with at most two distinct pathways between any two vertices.   
We first define a notion of total ordering on pairs of integers and then choose the least pair.    
We then list all possible local models of arrows of $Q$ at a fixed vertex.   
Writing an invariant polynomial which depends on this least pair,   
we carefully choose a subgroup of $\mathbb{U}_{\beta}$ and show that the invariant polynomial must only depend on diagonal coordinates of each general matrix in the filtered representation space.  
We will now give the full proof.

We label the arrows of $Q$ as $a_1$, $a_2$, $\ldots$, $a_{Q_1}$.   
Since it is clear that     
$\mathbb{C}[\mathfrak{t}^{\oplus Q_1}]\subseteq   \mathbb{C}[F^{\bullet}Rep(Q,\beta)]^{\mathbb{U}_{\beta}}$,  
we will prove the other inclusion.   
Consider a general representation of $F^{\bullet}Rep(Q,\beta)$, which is a tuple of matrices.  
We define a total ordering $\leq$ on pairs $(i,j)$, where $1\leq i\leq j\leq n$, by defining  
 $(i,j)\leq (i',j')$ if either   
 \begin{itemize}   
 \item $i < i'$ or   
 \item $i=i'$ and $j> j'$.  
 \end{itemize}   
 Let   
 $f\in \mathbb{C}[F^{\bullet}Rep(Q,\beta)]^{\mathbb{U}_{\beta}}$.   
  For each $(i,j)$, we can write  
 \begin{equation}\label{eq:aij-invariant-function-borel-at-most-two-pathways}
 f = \sum_{|K|\leq d} a_{ij}^K f_{ij,K}, \mbox{ where } f_{ij,K}\in \mathbb{C}[\{ {}_{(\alpha)}a_{st}: (s,t)\not=(i,j)\} ], 
 a_{ij}^K := \prod_{\alpha\in Q_1} {}_{(\alpha)}a_{ij}^{k_{\alpha}}, 
 \mbox{ and } 
 |K|=\sum_{\alpha=1}^{Q_1} k_{\alpha}.
 \end{equation}  
 Fix the least pair (under $\leq$) $(i,j)$ with $i< j$ for which there exists $K\not=(0,\ldots, 0)$ with $f_{ij,K}\not=0$; 
 we will continue to denote it by $(i,j)$. 
 If no such $(i,j)$ exists, then $f\in \mathbb{C}[{}_{(\alpha)}a_{ii}]$ and we are done. 
 Let $K=(k_1,\ldots, k_{Q_1})$. 
 Let $m\geq 1$ be the least integer satisfying the following: 
for all $p<m$, if some component $k_p$ in $K$ is strictly greater than $0$, then $f_{ij,K}=0$. 
%
%
 Relabel the head of the arrow corresponding to a general representation $A_m$ of arrow $a_m$ 
 as vertex $m$ (this is the same $m$ as in the previous sentence).  

 Let $U_{ij}$ be the subgroup of matrices of the form 
 $u_{ij} := (\I,\ldots, \I, \widehat{u}_m,\I, \ldots, \I)$, where 
 $\widehat{u}_m$ is the matrix with $1$ along the diagonal, the variable $u$ in the $(i,j)$-entry, and $0$ elsewhere. 
 Let $u_{ij}^{-1}:=(\I,\ldots, \I, \widehat{u}_m^{-1},\I, \ldots, \I)$. 
 Then since $u_{ij}^{-1}$ acts on $f$ via 
 \[ 
 \begin{aligned}
 u_{ij}^{-1}.f(A_1,\ldots, A_{Q_1}) &= 
 f(u_{ij}.(A_1,\ldots, A_{Q_1}))  \\ 
 &= 
 \begin{cases}  
 f(A_1, 
  \ldots,  
  \widehat{u}_{m}  A_{m'}     , 
  \ldots,  
  A_{Q_1}) &\mbox{ whenever vertex $m$ is a sink of arrow $a_{m'}$}, \\ 
   f(A_1, 
  \ldots,  
    A_{m'} \widehat{u}_{m}^{-1}    , 
  \ldots,  
  A_{Q_1}) &\mbox{ whenever vertex $m$ is a source of arrow $a_{m'}$}, \\ 
  f(A_1,  
  \ldots,  
  \widehat{u}_{m}  A_{m'}  \widehat{u}_{m}^{-1}, 
  \ldots,   
  A_{Q_1}) &\mbox{ whenever arrow $a_{m'}$ is a loop at vertex $m$}, \\ 
 \end{cases}  \\ 
 \end{aligned}
 \] 
 \begin{equation}\label{eq:ar-dynkin-Uij-action} 
  u_{ij}^{-1}.{}_{(\alpha)}a_{st}= 
 \begin{cases} 
 {}_{(m')}a_{ij} +{}_{(m')}a_{jj}u &\mbox{ if } \alpha=m', (s,t)=(i,j), \mbox{ and } m \mbox{ is a sink to } a_{m'}, \\ 
 {}_{(m')}a_{ij} -{}_{(m')}a_{ii}u &\mbox{ if } \alpha=m', (s,t)=(i,j), \mbox{ and } m \mbox{ is a source to } a_{m'}, \\
 {}_{(m')}a_{ij} +({}_{(m')}a_{jj}- {}_{(m')}a_{ii})u &\mbox{ if } \alpha=m', (s,t)=(i,j), \mbox{ and } a_{m'} \mbox{ is a loop at } m, \\ 
 {}_{(\alpha)}a_{st} &\mbox{ if }s>i \mbox{ or }s=i \mbox{ and }t<j.  \\ 
 \end{cases}  
 \end{equation}

%
Locally at vertex $m$, $Q$ has one of the following local models:  
\begin{multicols}{2}  
 \begin{enumerate}  
 \item\label{item:twopathways-case7}  $\xymatrix@-1pc{
  \stackrel{v_1}{\bullet} \ar[rr]^{a_1}  & & \stackrel{m}{\bullet} \ar[ddr]_{c_k} \ar[drr]_{c_{2}} \ar@/^/[rr]^{c_1} & & \ar@/^/[ll]_{a_{l}}  \stackrel{v_l}{\bullet}  \\
  \stackrel{v_2}{\bullet}  \ar[rru]^{a_2}  & \ddots & & .^{.^.}   & \stackrel{w_{2}}{\bullet}\\ 
 &  &\stackrel{v_{l-1}}{\bullet} \ar[uu]^{a_{l-1}} &\stackrel{w_k}{\bullet} & \\
 }$
 \item\label{item:twopathways-case10}   $\xymatrix@-1pc{ 
 \stackrel{v_1}{\bullet} \ar[drr]^{a_1}   & & & & \stackrel{w_{k}}{\bullet} \\ 
 \stackrel{v_2}{\bullet} \ar[rr]_{a_2}  & & \stackrel{m}{\bullet} \ar@(lu,ru)^{\zeta} \ar[drr]^{c_{2}} \ar[ddrr]_{c_{1}} \ar[urr]^{c_{k}} & & \vdots \\ 
 \vdots & & &  & \stackrel{w_{2}}{\bullet} \\  
 \stackrel{v_{l}}{\bullet} \ar[uurr]_{a_{l}} & & & &\stackrel{w_{1}}{\bullet} \\ 
 }$   
 \item\label{item:twopathways-case14}    $\xymatrix@-1pc{ 
 \stackrel{v_1}{\bullet} \ar[ddrr]^{a_1} & &  & & \stackrel{w_1}{\bullet} \\ 
  \vdots    & &  & &  \vdots \\  
 \stackrel{v_{l}}{\bullet}\ar[rr]^{a_{l}} & & \stackrel{m}{\bullet} \ar[ddl]_{c_k} \ar[ddr]^{c_{k'+2}} \ar@/^/[rruu]^{c_1} \ar@/_/[rruu]_{c_2}  \ar@/^/[rr]^{c_{k'}} \ar@/_/[rr]_{c_{k'+1}}& & \stackrel{w_{k''}}{\bullet} \\ 
 & &  & &  \\  
  & \stackrel{w_{k'''}}{\bullet} &\cdots  & \stackrel{w_{k''+1}}{\bullet} &   \\ 
 }$       
 \item\label{item:twopathways-case15}    $\xymatrix@-1pc{ 
 \stackrel{v_{l'}}{\bullet} \ar[ddrr]^{a_{l'}} & &  & & \stackrel{v_{l'+1}}{\bullet} \ar@/^/[lldd]^{a_{l'+2}} \ar@/_/[lldd]_{a_{l'+1}}  \\ 
  \vdots    & &  & &  \vdots \\  
 \stackrel{v_{1}}{\bullet}\ar[rr]^{a_{1}} & & \stackrel{m}{\bullet} \ar[ddl]_{c_1} \ar[ddr]^{c_{k}} & & \stackrel{v_{l''}}{\bullet} \ar@/^/[ll]^{a_l} \ar@/_/[ll]_{a_{l-1}} \\ 
 & &  & &  \\ 
  & \stackrel{w_1}{\bullet} &\cdots  & \stackrel{w_{k}}{\bullet} &   \\ 
 }$ 
 \end{enumerate}     
\end{multicols}    

The following argument holds for all four cases. 
Relabel the arrows in the following way:  
write  
$a_1,\ldots, a_l$ if $ha_i=m$,   
$c_1,\ldots, c_k$ if $tc_j=m$, and   
$\zeta$ if $\zeta$ is the loop at $m$.    
Let $\{ q_{\phi}\}_{0\leq \phi\leq Q_1-l-k-1}$  
(or $\{ q_{\phi}\}_{0\leq \phi\leq Q_1-l-k}$ if there is no loop at vertex $m$) be all the other arrows of $Q$,  
where $q_{0} :=\varnothing$.  
Write general representations of $a_{\alpha}$ as $A_{\alpha} =({}_{(\alpha)}a_{st})$,
$c_{\gamma}$ as $C_{\gamma}=({}_{(\gamma)}c_{st})$, and 
$\zeta$ as $\Xi = (\zeta_{st})$, and let $({}_{(\phi)}q_{st})$ be a general representation of $q_{\phi}$. 
Write 
\[ 
f = \sum_{\rho+|K'|+|\Gamma| \leq d} 
\zeta_{ij}^{\rho} 
\prod_{\alpha=1}^{l} {}_{(\alpha)}a_{ij}^{k_{\alpha}} 
\prod_{\gamma=1}^{k} {}_{(\gamma)}c_{ij}^{\mu_{\gamma}} \hspace{1mm}
f_{ij,\rho, K', \Gamma}, 
\] 
where $K'=(k_1,\ldots, k_l)$, 
$\Gamma = (\mu_1,\ldots, \mu_k)$, 
$|K'|=\displaystyle{\sum_{\alpha=1}^{l} k_{\alpha}}$, 
$|\Gamma| = \displaystyle{ \sum_{\gamma=1}^{k} \mu_{\gamma}}$, 
and 
$f_{ij,K',\Gamma} \in  
\mathbb{C}[\{ {}_{(\alpha)}a_{st}$, ${}_{(\gamma)}c_{st}$, $\zeta_{st}$, ${}_{(\phi)}q_{st}$ : 
$(s,t)\not=(i,j)$ 
$\mbox{ and }$ 
$\alpha\not\in \{ 1,\ldots, l\}$
$\mbox{ and }$ 
$\gamma\not\in \{1,\ldots, k \}
\}] =: R_1$. 
Then 
\[ 
\begin{aligned} 
0 &= u_{ij}^{-1}.f-f 
= \sum_{\rho+ |K'|+|\Gamma| \leq d}  
(\zeta_{ij}+(\zeta_{jj}-\zeta_{ii})u)^{\rho} 
\prod_{\alpha=1}^{l} ({}_{(\alpha)}a_{ij}+{}_{(\alpha)}a_{jj}u )^{k_{\alpha}} 
\prod_{\gamma=1}^{k} ({}_{(\gamma)}c_{ij}-{}_{(\gamma)}c_{ii}u )^{\mu_{\gamma}}
f_{ij,\rho, K', \Gamma} \\
   &\hspace{4mm}-
\sum_{\rho+ |K'|+|\Gamma| \leq d} 
\zeta_{ij}^{\rho}
\prod_{\alpha=1}^{l} {}_{(\alpha)}a_{ij}^{k_{\alpha}} 
\prod_{\gamma=1}^{k} {}_{(\gamma)}c_{ij}^{\mu_{\gamma}}\hspace{1mm}
f_{ij,\rho, K', \Gamma}  \\  
&= \sum_{1\leq \rho+ |K'|+|\Gamma| \leq d}  \sum_{ \stackrel{\tau\leq \rho}{
r_{\alpha}\leq k_{\alpha}, s_{\gamma}\leq \mu_{\gamma}
}}  
\binom{\rho}{\tau} 
  \binom{k_1}{r_1} \binom{k_2}{r_2} \cdots \binom{k_{l}}{r_{l}} 
  \binom{\mu_1}{s_1} \binom{\mu_2}{s_2} \cdots \binom{\mu_{k}}{s_{k}} \cdot \\  
  &\hspace{4mm}\cdot \zeta_{ij}^{\rho-\tau} 
  \prod_{\alpha=1}^{l} {}_{(\alpha)}a_{ij}^{k_{\alpha}-r_{\alpha}}
  \prod_{\gamma=1}^{k} {}_{(\gamma)}c_{ij}^{\mu_{\gamma}-s_{\gamma}} 
  \cdot u^{\tau+ |R|+|S|} 
  (\zeta_{jj}-\zeta_{ii})^{\tau}  
  \prod_{\alpha=1}^{l} ({}_{(\alpha)}a_{jj})^{r_{\alpha}} 
  \prod_{\gamma=1}^{k} (-{}_{(\gamma)}c_{ii})^{s_{\gamma}}
  f_{ij,\rho, K',\Gamma}, \\ 
\end{aligned}  
\] 
where $|R|=\displaystyle{\sum_{\alpha=1}^{l}r_{\alpha}}$ and 
$|S|=\displaystyle{\sum_{\gamma=1}^{k}s_{\gamma}}$,  
and we see that 
$\{\zeta_{ij}^{\rho-\tau} 
\displaystyle{ \prod_{\alpha=1}^{l}{}_{(\alpha)}a_{ij}^{k_{\alpha}-r_{\alpha}}
\prod_{\gamma=1}^{k} {}_{(\gamma)}c_{ij}^{\mu_{\gamma}-s_{\gamma}}
u^{\tau+ |R|+|S|} }:    
\tau\leq \rho,  
r_{\alpha}\leq k_{\alpha}, s_{\gamma}\leq \mu_{\gamma} \mbox{ for all }1\leq \alpha\leq l  
\mbox{ and for all }1\leq \gamma\leq k \}$   
is linearly independent over $R_1$.   
This implies that $f_{ij,\rho, K',\Gamma}$ $=$ $0$ 
whenever $|K'|\geq 1$, $|\Gamma|\geq 1$, or $\rho\geq 1$,   
which contradict our choices of $(i,j)$ and $m$. We conclude that 
 $f\in \mathbb{C}[\mathfrak{t}^{\oplus Q_1}]$.   
 \end{proof}

\section{Proof of Theorem~\ref{theorem:two-paths-max-quiver-semi-invariants-framed}}

Let $\lambda=(\lambda_1\geq \lambda_2\geq \ldots \geq \lambda_l\geq 0)$ be a {\em partition} of size 
$|\lambda|$ $=$ $\sum_{i=1}^{l} \lambda_i$.  
One identifies to $\lambda$ a left-justified shape of $l$ rows of boxes of length $\lambda_1$, $\lambda_2$, $\ldots$, $\lambda_l$,  
which is called the {\em Young diagram} associated to $\lambda$. 
A {\em (Young) filling} or a {\em Young tableau} 
of $\lambda$ assigns a positive integer to each box of Young diagram. 

\begin{example}\label{example:young-diagram}  
Associated to $\lambda=(5,3,3,1)$ is the Young diagram  
\[ 
\ydiagram{5,3,3,1} 
\] 
and a Young tableau    
\ytableausetup{mathmode, boxsize=2em} 
 \begin{ytableau} 
 4& 1& 2 & 3&3 \\ 
 3& 4& 5\\
 2& 5& 6\\    
 1 
 \end{ytableau}.    
\end{example}

\begin{definition}\label{definition:standard-form-bitableau}  
A Young tableau is   
{\em normal}   
if the entries in each row are strictly increasing from left to right. 
It is called {\em standard} if it is normal and the entries in each column are nondecreasing from top to bottom. 
A {\em bitableau} $J|I$ is a pair of Young tableaux $J$ and $I$ having the same shape and the bitableau is called {\em standard} if both $J$ and $I$ are standard Young tableaux. 
\end{definition}  

\begin{remark}\label{remark:fulton-and-harris-standard-semistandard} 
In Fulton and Harris (\cite{MR1153249}), a Young tableau is called {\em standard} if the entries in each row and column are strictly increasing from left to right and from top to bottom, and a Young tableau is called {\em semistandard} if the entries in each row are nondecreasing from left to right while the entries in each column are strictly increasing from top to bottom.  We will not use their definition in this paper. 
\end{remark}

The {\em bideterminant} $(J|I)$  
of a bitableau is defined in the following way: 
the positive integer entries in $J$ in a fixed row correspond to the rows of a matrix while the positive integer entries in   
$I$ in the same row correspond to the columns of a matrix. Take the determinant of these minors of a matrix and repeat for each row in $J|I$ to obtain the bideterminant 
$(J|I)$.   

Note that the bideterminant $(J|I)$ associated to a bitableau $J|I$ is a product of minors of a matrix, where $J$ are the row indices and $I$ are the column indices. 
Furthermore, a matrix (or a product of matrices) must be specified when calculating the bideterminant associated to a bitableau; 
such specification will be denoted on the lower-right corner of each row of the bitableau (and the bideterminant), i.e., see 
\eqref{eq:bitableau-general-form-for-framed-quivers} 
and  
\eqref{eq:bideterminant-general-form-for-framed-quivers}.  

\begin{example} 
Consider the two Young tableaux:  
\[ J = 
\ytableausetup{mathmode, boxsize=2em} 
 \begin{ytableau} 
 1& 2& 4 \\ 
 2& 4    \\
 1& 4    \\    
 \end{ytableau}
\hspace{2mm} 
\mbox{ and } 
\hspace{2mm}
I = 
\ytableausetup{mathmode, boxsize=2em} 
 \begin{ytableau} 
 1& 2 & 4 \\ 
 2& 4 \\
 2& 5 \\    
 \end{ytableau}. 
\] 
$J$ is normal but not standard (since the entries in the first column are not nondecreasing when reading from top to bottom), while $I$ is standard. 
\end{example}

From this point forward, we will not draw a box around each entry of a Young (bi)tableau or the bideterminant of a bitableau.

Let $Q^{\dagger}$ be a quiver with one framed vertex labelled as $1'$  
and all other nonframed vertices labelled as $1$, $2$, $\ldots$, $|Q_0|$,  
and the arrows labelled as $a_0$, $a_1$, $\ldots$, $a_{|Q_1|}$,  
where $ta_0=1'$ and $ha_0=1$.  
Let $A_{\phi_u}$ be a general representation of the arrow $a_{\phi_u}$. 
The product 
$A_{\phi_1}A_{\phi_{2}} \cdots A_0$  
of general matrices is associated to the quiver path 
$a_{\phi_1}a_{\phi_{2}} \cdots a_0$, which begins at the framed vertex.   
Moreover,   
$A_{\phi_1}A_{\phi_{2}} \cdots A_0$   
is uniquely associated to the sequence 
\begin{equation}\label{equation:unique-sequence-ass-to-prod-of-matrices}
\mathbf{\Phi} := [\phi_1, \phi_2, \ldots, 0] 
\end{equation}  
of integers obtained by reading the indices of 
$A_{\phi_1}A_{\phi_{2}} \cdots A_0$.   
Consider all 
$\mathbf{\Phi}$ whose quiver paths begin at the framed vertex;  
we will fix a partial ordering $\leq$ on these finite sequences of nonnegative integers. We say 
\[ 
\mathbf{\Phi} := [\phi_1, \phi_2, \ldots, 0] \leq 
[\psi_1, \psi_2, \ldots, 0] =: \mathbf{\Psi} 
\]  
if the number of components in 
$\mathbf{\Phi}$ 
is less than the number of components in 
$\mathbf{\Psi}$, 
or 
if the number of components in  
$\mathbf{\Phi}$ 
equals the number of components in 
$\mathbf{\Psi}$ 
and the right-most nonzero entry in 
$\mathbf{\Psi}-\mathbf{\Phi}$ is positive when subtracting component-wise.

\begin{definition}\label{definition:bideterminant-product-of-general-matrices-framed-quiver}  
The $s^{th}$ row of the bitableau  
\begin{equation}\label{eq:bitableau-general-form-for-framed-quivers}
 \begin{matrix}
   j_1^{(\phi)} & j_2^{(\phi)} & \ldots & j_{v_1}^{(\phi)}  &| 
  i_1^{(\phi)}& i_2^{(\phi)} &\ldots & {i_{v_1}^{(\phi)}}_{A_{\phi_1}\cdots  A_0} \\
   j_1^{(\psi)} & j_2^{(\psi)} & \ldots & j_{v_2}^{(\psi)}  &| 
  i_1^{(\psi)}& i_2^{(\psi)} &\ldots & { i_{v_2}^{(\psi)}}_{A_{\psi_1}\cdots  A_0} \\
       & \vdots&        &    &           &  \vdots   &       &                   \\
   j_1^{(\mu)} & j_2^{(\mu)} & \ldots & j_{v_l}^{(\mu)}  &| 
  i_1^{(\mu)}& i_2^{(\mu)} &\ldots & {i_{v_l}^{(\mu)}}_{A_{\mu_1}\cdots  A_0} \\
  \end{matrix} 
\end{equation} 
is defined to be the bitableau associated to rows 
$j_1^{(\psi)}$, $j_2^{(\psi)}$, $\ldots$, $j_{v_s}^{(\psi)}$ and columns 
$i_1^{(\psi)}$, $i_2^{(\psi)}$, $\ldots$, $i_{v_s}^{(\psi)}$ in the product $A_{\psi_1}A_{\psi_2}\cdots A_0$ of general matrices. 
The bideterminant 
\begin{equation}\label{eq:bideterminant-general-form-for-framed-quivers} 
\begin{matrix}
   (j_1^{(\phi)} & j_2^{(\phi)} & \ldots & j_{v_1}^{(\phi)}  &| 
  i_1^{(\phi)}& i_2^{(\phi)} &\ldots & i_{v_1}^{(\phi)})_{A_{\phi_1}\cdots  A_0} \\
   (j_1^{(\psi)} & j_2^{(\psi)} & \ldots & j_{v_2}^{(\psi)}  &| 
  i_1^{(\psi)}& i_2^{(\psi)} &\ldots & i_{v_2}^{(\psi)})_{A_{\psi_1}\cdots  A_0} \\
       & \vdots&        &    &           &  \vdots   &       &                   \\
   (j_1^{(\mu)} & j_2^{(\mu)} & \ldots & j_{v_l}^{(\mu)}  &| 
  i_1^{(\mu)}& i_2^{(\mu)} &\ldots & i_{v_l}^{(\mu)})_{A_{\mu_1}\cdots  A_0}  
\end{matrix}
\end{equation}
is the product of bideterminants of the form 
\[ 
\begin{matrix}
(j_1^{(\psi)} & j_2^{(\psi)} & \ldots & j_{v_s}^{(\psi)}  &| 
  i_1^{(\psi)}& i_2^{(\psi)} &\ldots & i_{v_s}^{(\psi)})_{A_{\psi_1}A_{\psi_2}\cdots  A_0}  
  \end{matrix} 
\]  
obtained by taking the determinant of minors of rows  
$j_1^{(\psi)}, j_2^{(\psi)},\ldots, j_{v_s}^{(\psi)}$ and columns  
$i_1^{(\psi)}, i_2^{(\psi)},\ldots, i_{v_s}^{(\psi)}$ in the product 
$A_{\psi_1} A_{\psi_2} \cdots A_0$ of general matrices.   
\end{definition}   
Note that the $s^{th}$ row of \eqref{eq:bitableau-general-form-for-framed-quivers} is associated to the sequence $\mathbf{\Psi}$ of integers.  
We say the bitableau \eqref{eq:bitableau-general-form-for-framed-quivers}  
is in {\em block standard form}  
if the sequence $\mathbf{\Phi}$ of integers associated to each row of the bitableau is in nondecreasing order (with respect to the partial ordering $\leq$ defined earlier in this section) when ascending down the rows, and in the case the sequence $\mathbf{\Phi}$ 
for multiple rows are identical, then these rows are in standard form as defined in Definition~\ref{definition:standard-form-bitableau}.  
In the case that the bitableau \eqref{eq:bitableau-general-form-for-framed-quivers} is in block standard form, we will interchangeably say the bideterminant (associated to the bitableau) is in block standard form.

\begin{example}\label{example:standard-form-bitableau} 
Let 
\[  
A_0 = \begin{pmatrix}   
 x_{11}& x_{12} \\  
 x_{21} &x_{22}   
\end{pmatrix}
 \mbox{ and } 
 A_1= \begin{pmatrix} 
  a_{11}&a_{12} \\   
   0    &a_{22}  
 \end{pmatrix}. 
\] 
Then 
\[ 
\begin{matrix} 
 ( 2& 		  |& 1 & 		&)_{A_0} \\  
 (2&  		  |& 2 & 		&)_{A_0} \\
 (1& 		2 |& 1 & 2  &)_{A_1A_0} \\ 
 ( 2& 		  |& 1 & 		&)_{A_1A_0}  
\end{matrix}
\] 
is a bideterminant in block standard form, 
and the polynomial associated to the bideterminant is  
\[   
\begin{aligned}  
x_{21} x_{22} \det(A_1A_0)\cdot (A_1A_0)_{2,1}&= 
x_{21}x_{22} a_{11}a_{22}(x_{11}x_{22}-x_{12}x_{21})a_{22} x_{21} \\ 
&= x_{21}^2 x_{22}  a_{11}a_{22}^2 (x_{11}x_{22} - x_{12} x_{21}).\\ 
\end{aligned}
\] 
\end{example}

The following is Theorem 13.1 in \cite{MR1489234}. 

\begin{theorem}\label{theorem:standard-basis-theorem-Grosshans}
Let $R= \mathbb{C}[\{ x_{ij}: 1\leq i\leq n, 1\leq j\leq m\}]$. 
Then bideterminants of standard bitableaux form a basis over $\mathbb{C}$ of $R$. 
\end{theorem}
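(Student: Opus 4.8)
The plan is to prove the two defining properties of a basis separately: that the bideterminants of standard bitableaux \emph{span} $R$ over $\mathbb{C}$, and that they are \emph{linearly independent}. Throughout, let $X=(x_{ij})$ be the generic $n\times m$ matrix, so that every bideterminant $(J|I)$ is a product of minors of $X$, and in particular every monomial in the $x_{ij}$ is itself a bideterminant (a product of the $1\times 1$ minors $(i|j)=x_{ij}$). Since monomials span $R$, the spanning half will follow once I show that an \emph{arbitrary} product of minors can be rewritten as a $\mathbb{C}$-linear combination of standard bideterminants.

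For spanning I would set up a \emph{straightening algorithm}. First fix a total order on products of minors of a given multidegree that strictly decreases under straightening---for instance, order first by the shape of the associated bitableau (as a partition, in dominance order) and then lexicographically on the entries. The engine is a quadratic \emph{straightening relation}, a Grassmann--Pl\"ucker/shuffle syzygy: a product of two minors whose rows violate the column-nondecreasing condition equals a $\mathbb{C}$-linear combination of strictly smaller products of minors, of the same content but in general of different shapes, exactly as in the $2\times 2$ identity $x_{12}x_{21}=x_{11}x_{22}-\det X$. I would derive this relation from the Laplace (alternating) expansion of minors of $X$, verify the strict decrease, and then run well-founded induction: iterating the relation on any non-standard factor terminates, leaving a $\mathbb{C}$-combination of standard bideterminants. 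Applying this to monomials proves that standard bideterminants span $R$.

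For linear independence I would pass to the natural $\mathbb{Z}_{\ge 0}^{n}\times \mathbb{Z}_{\ge 0}^{m}$-multigrading of $R$ in which $x_{ij}$ has degree $(e_i,e_j)$. Each bideterminant $(J|I)$ is multihomogeneous, with row-multidegree $\mu$ equal to the content of $J$ and column-multidegree $\nu$ equal to the content of $I$; since straightening preserves multidegree, it suffices to prove independence inside each finite-dimensional piece $R_{\mu,\nu}$. Here $\dim_{\mathbb{C}} R_{\mu,\nu}$ is the number of $n\times m$ nonnegative integer matrices with row sums $\mu$ and column sums $\nu$, which by the Robinson--Schensted--Knuth correspondence equals $\sum_{\lambda} K_{\lambda\mu}K_{\lambda\nu}$, a sum of products of Kostka numbers over partitions $\lambda$. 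On the other hand, a standard tableau in the sense used here (rows strictly increasing, columns nondecreasing) of shape $\lambda$ is precisely the transpose of a Fulton--Harris semistandard tableau (Remark~\ref{remark:fulton-and-harris-standard-semistandard}) of shape $\lambda^{T}$, so the number of standard bideterminants lying in $R_{\mu,\nu}$ equals $\sum_{\lambda} K_{\lambda^{T}\mu}K_{\lambda^{T}\nu}=\sum_{\lambda} K_{\lambda\mu}K_{\lambda\nu}=\dim_{\mathbb{C}} R_{\mu,\nu}$. As the standard bideterminants of bi-content $(\mu,\nu)$ span $R_{\mu,\nu}$ by the previous paragraph and their number equals its dimension, they form a basis of $R_{\mu,\nu}$; assembling over all $(\mu,\nu)$ gives the theorem.

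I expect the straightening relation to be the main obstacle: both writing down the correct shuffle syzygy among products of minors and, crucially, verifying that every resulting term is strictly smaller in the chosen order (so that the rewriting terminates) require care, and one must track that the shape may genuinely change. A cleaner but more abstract alternative would replace the explicit Kostka/RSK count in the last paragraph by a confluence (diamond-lemma) argument, showing that the straightening rewriting system has unique normal forms and hence yields spanning and independence simultaneously; I would fall back on this if matching the paper's convention for ``standard'' against the transpose convention of Remark~\ref{remark:fulton-and-harris-standard-semistandard} made the bookkeeping delicate.
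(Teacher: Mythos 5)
The paper gives no proof of this statement at all: it is imported verbatim as Theorem~13.1 of Grosshans \cite{MR1489234}, so your attempt can only be measured against the standard literature argument --- and what you propose is exactly that argument, the classical straightening-law proof in the tradition of Doubilet--Rota--Stein and D\'esarm\'enien--Kung--Rota. Your outline is correct on both halves. For spanning, viewing a monomial as a product of $1\times 1$ minors and straightening is the right engine, and your $2\times 2$ example $x_{12}x_{21}=x_{11}x_{22}-\det X$ correctly illustrates the essential feature that the shape changes (here $(1,1)$ produces a term of shape $(2)$), so any termination order must accommodate dominance moves on shapes as well as lexicographic moves within a shape; as you yourself flag, writing the general shuffle/Laplace syzygy and verifying well-founded decrease is the technical heart of the theorem and is the one piece your proposal leaves genuinely unproved. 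For independence, your bookkeeping checks out: with the paper's convention (rows strictly increasing, columns weakly increasing --- see Definition~\ref{definition:standard-form-bitableau} and Remark~\ref{remark:fulton-and-harris-standard-semistandard}), transposition is a content-preserving bijection onto semistandard tableaux of conjugate shape, so the number of standard bideterminants of bi-content $(\mu,\nu)$ is $\sum_{\lambda}K_{\lambda^{T}\mu}K_{\lambda^{T}\nu}=\sum_{\lambda}K_{\lambda\mu}K_{\lambda\nu}$, which RSK identifies with the number of monomials of that bidegree; and since a spanning family whose cardinality equals $\dim_{\mathbb{C}}R_{\mu,\nu}$ is automatically a basis, this count also delivers, for free, the implicitly needed fact that distinct standard bitableaux yield distinct polynomials. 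One remark on your fallback plans: rather than a full confluence argument, a cheaper substitute for the RSK count is the Gr\"obner-theoretic observation that under a diagonal term order the initial monomial of a standard bideterminant is the product of the diagonal entries of its minors, and this assignment is injective on standard bitableaux, giving linear independence directly.
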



\begin{lemma}\label{lemma:borel-invariants-on-bideterminants}  
Let invertible upper triangular matrices $B\subseteq GL_n(\mathbb{C})$ act on $M_{n\times m}$ via left translation. 
Consider the character  
$\chi_p(b)$ $=$ $\displaystyle{\prod_{i=p}^n b_{ii}}$ of $B$ and let 
$f=(p\:\: p+1\:\cdots n | i_1 \cdots i_{n-p+1})\in \mathbb{C}[M_{n\times m}]$ 
where $1\leq i_1 < i_2 < \ldots < i_{n-p+1}\leq m$. 
Then $b.f=\chi_p(b)f$. 
\end{lemma}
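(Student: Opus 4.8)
The plan is to recognize $f$ concretely as a single minor and then track how left multiplication by $b$ transforms the rows that enter it. Writing $R = \{p, p+1, \ldots, n\}$ for the row set and $C = \{i_1, \ldots, i_{n-p+1}\}$ for the column set, the bideterminant $f$ evaluated at a matrix $X \in M_{n\times m}$ is the $(n-p+1)\times(n-p+1)$ minor $f(X) = \det X_{R,C}$, where $X_{R,C}$ denotes the submatrix of $X$ with rows indexed by $R$ and columns indexed by $C$. Under left translation the point $X$ is sent to $bX$, so I must compute $f(bX) = \det (bX)_{R,C}$ and show it equals $\chi_p(b)\, f(X)$.

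First I would isolate why the specific choice of rows matters. Because $b$ is upper triangular, its $(k,l)$ entry vanishes for $l < k$; hence for any row index $k$, the $k$-th row of $bX$ is a linear combination of rows $k, k+1, \ldots, n$ of $X$ only. Crucially, $R$ is the tail segment $\{p, \ldots, n\}$, so for every $k \in R$ each contributing index $l$ satisfies $l \geq k \geq p$ and therefore also lies in $R$. This closure property yields the block factorization $(bX)_{R,C} = b_{R,R}\, X_{R,C}$, where $b_{R,R}$ is the lower-right $(n-p+1)\times(n-p+1)$ principal block of $b$. (Had $R$ been an arbitrary row set rather than a tail segment, rows outside $R$ would leak into the linear combinations and this factorization would fail; this is precisely the point at which upper-triangularity together with the tail structure of $R$ is used.)

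With the factorization in hand, the conclusion follows from multiplicativity of the determinant: $f(bX) = \det(b_{R,R}\, X_{R,C}) = \det(b_{R,R})\,\det(X_{R,C}) = \det(b_{R,R})\, f(X)$. Since $b_{R,R}$ is itself upper triangular with diagonal entries $b_{pp}, b_{p+1,p+1}, \ldots, b_{nn}$, its determinant is $\prod_{i=p}^n b_{ii} = \chi_p(b)$, giving $f(bX) = \chi_p(b)\, f(X)$, which is the asserted semi-invariance $b.f = \chi_p(b)\, f$. The only genuinely substantive step is the block factorization of the preceding paragraph; once it is established, the remainder is a one-line determinant identity, so I expect the main obstacle to be nothing more than stating the closure property of $R$ cleanly, rather than any real computational difficulty.
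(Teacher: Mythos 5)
Your proof is correct, but it takes a genuinely different route from the paper's. The paper decomposes $b=tu$ with $t$ in the maximal torus $T$ and $u$ in the maximal unipotent subgroup $U$, and verifies the transformation law generator by generator: the torus visibly scales $f$ by $\prod_{i=p}^{n}t_{ii}$, while for an elementary unipotent $\widehat{u}\in U_{i,j}$ multilinearity of the determinant produces the correction term $u\,(p\;\: p+1\cdots j\cdots j\cdots n\,|\,i_1\cdots i_{n-p+1})$, which vanishes because a determinant with a repeated row is zero (here the hypothesis $p\leq i<j\leq n$ guarantees both rows $i$ and $j$ lie in the minor). You instead prove the single matrix identity $(bX)_{R,C}=b_{R,R}X_{R,C}$ for the tail row set $R=\{p,\ldots,n\}$ and finish by multiplicativity of the determinant together with $\det b_{R,R}=\prod_{i=p}^{n}b_{ii}$. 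Your block factorization is the more economical argument: it treats all of $B$ at once, avoids the paper's implicit appeal to the fact that $T$ and the subgroups $U_{i,j}$ generate $B$, and isolates exactly where upper-triangularity and the tail structure of $R$ are used. What the paper's generator-wise computation buys instead is reusability: the same elementary-subgroup calculation is the template repeated in Lemma~\ref{lemma:borel-invariants-on-equioriented-acylic-quiver} and in the proof of Theorem~\ref{theorem:two-paths-max-quiver-semi-invariants-framed}, where the group acts on products $A_m\cdots A_0$ of general matrices and a clean global factorization would be less transparent. One small caveat: the paper defines the action on functions contragrediently, $(b.f)(X)=f(b^{-1}X)$, under which the weight would be $\chi_p(b)^{-1}$; your computation $f(bX)=\chi_p(b)f(X)$ matches the lemma as stated, and the paper's own torus computation glosses over the same inverse, so this is a convention wobble shared with the source rather than a defect in your argument.
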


\begin{proof} 
 Write $b=tu$, where $t=(t_{ii})\in T$ and $u\in U$, $T$ is the maximal torus in $B$ and $U$ is the maximal unipotent subgroup of $B$. 
Then 
$t.f = \displaystyle{\prod_{i=p}^n t_{ii} f} = \chi_p(t)f$ 
and for the subgroup $U_{i,j}$ which has $1$ along the main diagonal, the variable $u$ in the $(i,j)$-entry and $0$ elsewhere,
we will show that $U_{i,j}$ fixes $f$. 
So let $\widehat{u}\in U_{i,j}$. 
Then since $\widehat{u}.f(x) = f(\widehat{u}^{-1}.x) = f(\widehat{u}^{-1}x)$, 
\[
 \widehat{u}.x_{st} = 
\begin{cases} 
x_{it}-x_{jt}u &\mbox{ if } s =i, \\ 
x_{st} &\mbox{ otherwise}. \\ 
\end{cases}
\]
So $\widehat{u}.f=f-u(p\:\: p+1\cdots j \cdots j \cdots n| i_1 \cdots i_{n-p+1})=f$ if $p\leq i$
and $\widehat{u}.f=f$ if $p>i$. 
This concludes the proof. 
\end{proof}

The following lemma generalizes Lemma~\ref{lemma:borel-invariants-on-bideterminants}. 

\begin{lemma}\label{lemma:borel-invariants-on-equioriented-acylic-quiver}
Consider the {\em equioriented} (all arrows are pointing in the same direction) quiver 
\[ 
\xymatrix@-1pc{
\stackrel{1'}{\circ} \ar[rr]^{a_0} & & \stackrel{1}{\bullet} \ar[rr]^{a_1} & & \stackrel{2}{\bullet} 
&\ldots & \stackrel{r}{\bullet} \ar[rr]^{a_r} & & \stackrel{r+1}{\bullet},   
}
\]  
where $\beta_{1'}=m$ and $\beta_i = n$ for $1\leq i\leq r+1$. 
Let $\mathbb{U}_{\beta}\subseteq B^{r+1}$ be the product of largest unipotent subgroups and let  
$A_m$ be a general matrix associated to arrow $a_m$ for each $0\leq m\leq r$.  
Then $F^{\bullet}Rep(Q,\beta)=M_{n\times m}\oplus \mathfrak{b}^{\oplus r}$ 
and standard bideterminants of the form 
\begin{equation}\label{eq:standard-bideterminant-one-row-only-lemma}
(p\:\: p+1 \: \cdots \: n \: | \: i_1 \: i_2 \: \cdots \: i_{n-p+1} )_{A_{m}\cdots A_0}, 
\hspace{4mm} \mbox{ where }  1\leq p\leq n \mbox{ and } 0 \leq m\leq r, 
\end{equation}  
are $\mathbb{U}_{\beta}$-invariant polynomials. 
\end{lemma}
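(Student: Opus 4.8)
The plan is to reduce the statement to Lemma~\ref{lemma:borel-invariants-on-bideterminants} by exploiting the telescoping nature of the $\mathbb{U}_\beta$-action on a product of arrow matrices along the equioriented quiver. Write a generic element of $\mathbb{U}_\beta$ as $(u_1, \ldots, u_{r+1})$ with each $u_i \in U$ the unipotent matrix acting at nonframed vertex $i$, recalling that no group acts at the framed vertex $1'$. First I would record how each arrow matrix transforms under the change-of-basis action: $A_0 \mapsto u_1 A_0$ for the framing arrow $a_0 : 1' \to 1$, and $A_k \mapsto u_{k+1} A_k u_k^{-1}$ for $a_k : k \to k+1$ with $1 \le k \le r$.

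The key step is to compute the induced action on the product matrix $P := A_m A_{m-1} \cdots A_1 A_0$ whose minors define the bideterminant \eqref{eq:standard-bideterminant-one-row-only-lemma}. Substituting the transformed matrices and cancelling the intermediate factors $u_k^{-1}u_k = \I$ telescopically gives
\[
(u_{m+1}A_m u_m^{-1})(u_m A_{m-1}u_{m-1}^{-1})\cdots (u_2 A_1 u_1^{-1})(u_1 A_0) = u_{m+1}\,P .
\]
Thus the entire $\mathbb{U}_\beta$-action on $P$ collapses to left multiplication by the single unipotent matrix $u_{m+1}\in U$ at the head vertex $m+1$, all other group factors cancelling.

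Since the bideterminant in \eqref{eq:standard-bideterminant-one-row-only-lemma} is by definition the minor of $P$ on the consecutive bottom row set $\{p, p+1, \ldots, n\}$, it is a function of $P$ alone, so its value at the transformed tuple equals its value at $u_{m+1}P$. I would then invoke Lemma~\ref{lemma:borel-invariants-on-bideterminants} for $B$ acting on $M_{n\times m}$ by left translation: it gives $b.f = \chi_p(b)f$ for $f = (p\;p+1\cdots n \mid i_1 \cdots i_{n-p+1})$, and since $\chi_p(u) = \prod_{i=p}^n u_{ii} = 1$ for every $u \in U$, the character is trivial on the unipotent subgroup. Hence $f$ is fixed by left multiplication by $u_{m+1}$, and therefore by all of $\mathbb{U}_\beta$, which is the claimed invariance.

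There is no serious obstacle once the telescoping is observed; the only points requiring care are fixing the change-of-basis convention consistently so that the interior factors $u_k$ cancel rather than accumulate, and checking that the row indices $p, p+1, \ldots, n$ match exactly the consecutive bottom block demanded by Lemma~\ref{lemma:borel-invariants-on-bideterminants}, which is precisely the shape of \eqref{eq:standard-bideterminant-one-row-only-lemma}.
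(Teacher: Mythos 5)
Your proof is correct and follows essentially the same route as the paper's: the paper likewise reduces everything to the fact that the $\mathbb{U}_{\beta}$-action telescopes on the product $A_m \cdots A_0$ so that only left multiplication at the head vertex $m+1$ survives, though it verifies this on elementary one-parameter subgroups $U_{ij}$ in coordinates and then redoes the repeated-row cancellation by hand, whereas you telescope globally and quote Lemma~\ref{lemma:borel-invariants-on-bideterminants} together with the observation that $\chi_p$ is trivial on unipotent elements. The substance is identical, and your packaging of the final step via the character formula is a clean shortcut.
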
  

  
\begin{proof}   
For $u=(u_1,\ldots, u_{r+1})\in \mathbb{U}_{\beta}$ and  
$(A_0,A_1,\ldots, A_r)\in M_{n\times m}\oplus \mathfrak{b}^{\oplus r}$, 
\[ 
u.(A_0,A_1,\ldots, A_r) = 
(u_1 A_0, u_2 A_1 u_1^{-1},\ldots, u_{\alpha+1}A_{\alpha}u_{\alpha}^{-1},\ldots, u_{r+1}A_r u_r^{-1}). 
\] 
We will write the entries of the product $A_{\alpha}\cdots A_0$ 
of matrices as $({}_{(\alpha)}y_{st})$, where $({}_{(0)}y_{st})=(x_{st})\in M_{n\times m}$. 
Then for the subgroup $U_{ij}$ of $\mathbb{U}_{\beta}$ which is    
\[ 
\begin{aligned} 
U_{ij} =   \{ u_{ij} = (\I,\ldots, \widehat{u}_m,\ldots, \I):   
\widehat{u}_m &\mbox{ is the matrix with the variable $u$ in the }(i,j)\mbox{-}entry, \mbox{ where }i<j, \\ 
&1 \mbox{ along the diagonal entries, and }
0 \mbox{ elsewhere}     \},    \\ 
\end{aligned}
\] 
$u_{ij}\in U_{ij}$ acts on $({}_{(\alpha)} y_{st}) \in A_{\alpha}\cdots A_1 A_0$ as follows: $U_{ij}$ changes the coordinate polynomial ${}_{(\alpha)}y_{st}$ via  
\[   
u_{ij}.{}_{(\alpha)}y_{st} = 
\begin{cases} 
{}_{(m-1)}y_{it}-{}_{(m-1)}y_{jt}u &\mbox{ if }\alpha = m-1 \mbox{ and }s=i, \\ 
{}_{(\alpha)}y_{st} &\mbox{ otherwise. } \\ 
\end{cases} 
\] 
So for $f=(p\:\: p+1\:\cdots \: n| i_1\: i_2 \: \cdots i_{n-p+1})_{A_{\alpha}\cdots A_0}$, 
\[ 
u_{ij}.f=
\begin{cases} 
f-u(p\:\: p+1\:\cdots j\:\cdots \:j \cdots \: n| i_1\: i_2 \: \cdots i_{n-p+1})_{A_{m-1}\cdots A_0} = f  &\mbox{ if } \alpha=m-1 \mbox{ and } p\leq i, \\ 
f &\mbox{ otherwise. }    \\ 
\end{cases}
\] 
Thus standard bideterminants of the form 
$(p\:\: p+1 \: \cdots \: n \: | \: i_1 \: i_2 \: \cdots \: i_{n-p+1} )_{A_{m}\cdots A_0}$
are $\mathbb{U}_{\beta}$-invariant polynomials, 
where $1\leq p\leq n$ 
 and 
$0 \leq m\leq r$. 
\end{proof}

\begin{definition}\label{definition:bideterminant-poly-for-theorem}
Assume 
$\beta=(n,\ldots, n,m)\in \mathbb{Z}_{\geq 0}^{Q_0^{\dagger}}$,  
where $m$ is associated to the framed vertex. 
Define 
\begin{equation}\label{eq:bideterminant-block-standard-form-generic-notation}
(J|I)_{\mathbf{\Phi } \mathbf{\Psi} \cdots \mathbf{\Gamma}} := 
  \begin{matrix}
   (j_1 & j_1+1 & \ldots & n  &| i_1^{(\phi)}& i_2^{(\phi)} &\ldots & i_{n-j_1+1}^{(\phi)})_{A_{\phi_1} \cdots  A_0} \\
   (j_2 & j_2+1 & \ldots & n  &| i_1^{(\psi)}& i_2^{(\psi)} &\ldots & i_{n-j_2+1}^{(\psi)})_{A_{\psi_1} \cdots  A_0} \\ 
       & \vdots&        &    &           &  \vdots   &       &                   \\
   (j_l & j_l+1 & \ldots & n  &| i_1^{(\mu)}& i_2^{(\mu)} &\ldots & i_{n-j_l+1}^{(\mu)})_{A_{\mu_1} \cdots  A_0}  
  \end{matrix}  
\end{equation}  
as the bideterminant in block standard form, 
where  
$A_{\psi_1}\cdots A_0$ 
is a general representation of the quiver path $a_{\psi_1}\cdots a_0$ 
which begins at the framed vertex.  
\end{definition}

Note that each sequence $\mathbf{\Phi_{}}$ of integers associated to each row of   \eqref{eq:bideterminant-block-standard-form-generic-notation}  
corresponds to a general representation of a quiver path that begins at the framed vertex (this is important as this will imply the uniqueness of \eqref{eq:bideterminant-block-standard-form-generic-notation}:   
 if 
 $(J|I)_{\mathbf{\Phi_{}}\mathbf{\Psi_{}} \cdots \mathbf{\Gamma_{}}} 
 =  
 (J'|I')_{\mathbf{\Phi_{}'}\mathbf{\Psi_{}'} \cdots \mathbf{\Gamma_{}'}}$ 
 where $(J|I)_{\mathbf{\Phi_{}}\mathbf{\Psi_{}} \cdots \mathbf{\Gamma_{}}}$ and  
 $(J'|I')_{\mathbf{\Phi_{}'}\mathbf{\Psi_{}'} \cdots \mathbf{\Gamma_{}'}}$  
 are in block standard form,  
 then $J=J'$, $I=I'$, and  
 $\mathbf{\Phi}=\mathbf{\Phi'}$, 
 $\mathbf{\Psi_{}}=\mathbf{\Psi_{}'}$,
 $\ldots$, 
 $\mathbf{\Gamma} = \mathbf{\Gamma'}$). 
 The following proof is a generalization of the proof of Theorem 13.3 in \cite{MR1489234}.

\begin{proof}  
By Theorem~\ref{theorem:two-paths-max-quiver-semi-invariants}, it suffices to find all invariants for paths starting at a framed vertex. 
Let ${}_{(\alpha)}a_{st}$ be the entries of a general matrix $A_{\alpha}$ and let $x_{st}$ be the entries of a general matrix $A_0$. 
Suppose $f$ is a $\mathbb{U}_{\beta}$-invariant polynomial.  
Without loss of generality, if 
$f(x_{st},{}_{(\alpha)}a_{st})=g(x_{st},{}_{(\alpha)}a_{st}) +h({}_{(\alpha)}a_{ii})$, 
where all the monomials of $g$ are divisible by some $x_{st}$ for some $s$ and $t$ and $h\in \mathbb{C}[\mathfrak{t}^{\oplus Q_1}]$, 
then we only consider $g$ by subtracting off $h$ since we have already proved that $h=h({}_{(\alpha)}a_{ii})$ is an invariant polynomial.  
By applying Lemma~\ref{lemma:borel-invariants-on-equioriented-acylic-quiver} to each row of  
\eqref{eq:bideterminant-block-standard-form-generic-notation}, we see that \eqref{eq:affine-dk-quiver-general-n-inv-poly}  
is a $\mathbb{U}_{\beta}$-invariant polynomial.  
Now suppose there exists a polynomial not in $\mathbb{C}[\mathfrak{t}^{\oplus Q_1}]$ or not of the form \eqref{eq:affine-dk-quiver-general-n-inv-poly}  
 which is in $\mathbb{C}[F^{\bullet}Rep(Q,\beta)]^{\mathbb{U}_{\beta}}$.
 That is, 
 suppose there exists $f\in \mathbb{C}[F^{\bullet}Rep(Q,\beta)]$ fixed by $\mathbb{U}_{\beta}$ 
 with its monomials divisible by $x_{st}$ for some $s$ and $t$     
 which cannot be written as   
 \eqref{eq:affine-dk-quiver-general-n-inv-poly}. 
Let $j\leq n-1$ be the biggest integer which satisfies the following: 
\begin{center}\label{center:unique-choice-of-j-affine-Ar-dynkin}
there exists a $\mathbb{U}_{\beta}$-invariant 
$F\in \mathbb{C}[F^{\bullet}Rep(Q,\beta)]$ such that when $F$ is written in terms of the block standard basis, i.e., 
$F=\sum_{\nu}g_{\nu}({}_{(\alpha)}a_{st})(J_{\nu}|I_{\nu})_{
\mathbf{\Phi_{}}\mathbf{\Psi_{}} \cdots \mathbf{\Gamma_{}}}$
with each $(J_{\nu}|I_{\nu})_{\mathbf{\Phi_{}}\mathbf{\Psi_{}} 
\cdots \mathbf{\Gamma_{}}}$ 
a standard Young bideterminant in block standard form and each $g_{\nu}\not=0$, 
then there exists a $v$ and a row in $J_v$ where $j$ is not followed by $j+1$. 
Let us label this choice of $j$ as 
$(\dagger)$. 
\end{center}

Let $a_{\phi}$ be the arrow associated to a general representation $A_{\phi}$. 
Writing $ha_{\phi}$ to be the head of the arrow $a_{\phi}$, let 
$U_{j,j+1}$ be the subgroup consisting of matrices of the form 
$u_{j,j+1} = (\I,\ldots, u_{ha_{\phi}},\ldots, \I)$ where $ha_{\phi}$ has diagonal entries $1$, the variable $-u$ in $(j,j+1)$-entry, and $0$ elsewhere.  
Let's write the entries of the product $A_{\alpha}\cdots A_0$ of matrices as $y_{st}$. 
Then for $u_{j,j+1}\in U_{j,j+1}$, 
\[ 
u_{j,j+1}.y_{st} = 
\begin{cases} 
y_{jt}+u y_{j+1,t} &\mbox{ if }\alpha=\phi \mbox{ and } s=j, \\ 
y_{st} 						&\mbox{ otherwise} \\ 
\end{cases} 
\] 
since $A_{\alpha}\cdots A_0$ is a general representation of the quiver path $a_{\alpha}\cdots a_0$. 
To explain further, if the path $a_{\alpha}\cdots a_0$ includes $a_{\phi}$ 
somewhere strictly in the middle of the path, i.e., 
$a_{\alpha}\cdots a_0$ $=$ $a_{\alpha}\cdots a_{\phi}\cdots a_0$, then although $u_{ha_{\phi}}$ 
acts by left multiplication on $A_{\phi}$, 
$u_{ha_{\phi}}$ acts by right (inverse) multiplication on the general representation of the arrow in the path immediately following $a_{\phi}$ (this is the arrow which is immediately to the left of $a_{\phi}$ 
in the concatenation of the arrows $a_{\alpha}\cdots a_0$). 
Thus, the action by $u_{ha_{\phi}}$ is canceled.  
So for any $\alpha$,  
$u$ fixes every minor of the form   
\[  
(\cdots j\:\: j+1 \cdots | \cdots )_{\mathbf{\Phi_{ }}\mathbf{\Psi_{ }} \cdots \mathbf{\Gamma_{}}}. 
\]

Now let us write 
\[ 
F = \sum_{\nu} g_{\nu}({}_{(\alpha)} a_{st})(J_{\nu}|I_{\nu})_{
\mathbf{\Phi_{ }}\mathbf{\Psi_{}} \cdots \mathbf{\Gamma_{ }}}
+ \sum_{\gamma} g_{\gamma}({}_{(\alpha)} a_{st})(J_{\gamma}|I_{\gamma})_{
\mathbf{\Phi_{ }}\mathbf{\Psi_{ }} \cdots \mathbf{\Gamma_{ }}}, 
\]   
with the following properties:  
\begin{itemize}  
\item the $g_{\nu}$ are nonzero, 
\item there exists at least one row in each $J_{\nu}$ which contains $j$ but not $j+1$,
\item if $j$ appears in any row of $J_{\gamma}$, then so does $j+1$, 
\item the $(J_{\nu}|I_{\nu})_{
\mathbf{\Phi_{ }}\mathbf{\Psi_{ }} \cdots \mathbf{\Gamma_{ }}}$ and 
$(J_{\gamma}|I_{\gamma})_{
\mathbf{\Phi_{ }}\mathbf{\Psi_{ }} \cdots \mathbf{\Gamma_{ }}}$ are unique. 
\end{itemize} 
By Lemma~\ref{lemma:borel-invariants-on-equioriented-acylic-quiver}, 
$\mathbb{U}_{\beta}$ fixes each row of $(J_{\gamma}|I_{\gamma})_{
\mathbf{\Phi_{ }}\mathbf{\Psi_{ }} \cdots \mathbf{\Gamma_{ }}}$.   
Since  
$(J_{\gamma}|I_{\gamma})_{\mathbf{\Phi_{ }}\mathbf{\Psi_{ }} \cdots 
\mathbf{\Gamma_{ }}}$  
is the product of the rows in the bideterminant, 
$\mathbb{U}_{\beta}$ 
fixes 
$(J_{\gamma}|I_{\gamma})_{\mathbf{\Phi_{ }}\mathbf{\Psi_{ }} \cdots \mathbf{\Gamma_{ }}}$,   
which in turn fixes   
$\displaystyle{\sum_{\gamma} g_{\gamma}({}_{(\alpha)} a_{st})}  (J_{\gamma}|I_{\gamma})_{
\mathbf{\Phi_{ }}\mathbf{\Psi_{ }} \cdots \mathbf{\Gamma_{ }}}$.  
Among those rows with identical sequence $\mathbf{\Phi}$ 
in each $J_{\nu}$ in block standard form,   
the only possible occurrences of $j$ and $j+1$ in its rows are as follows:  
\begin{enumerate}
\item $j$ is followed by $j+1$,  
\item $j$ is followed by an integer larger than $j+1$,  
\item $j$ ends in a row,   
\item $j+1$ is preceded by an integer smaller than $j$, 
\item $j+1$ starts a row. 
\end{enumerate}
Since $J_{\nu}$ is in block standard form, all rows of type $(i)$ 
must occur above all rows of type $(i+1)$ within each block.

After re-numbering the indices $\nu$, 
let $J_1$ have the greatest number of rows, say $M$, of types $(2)$ and $(3)$. 
There may be other Young tableaux, say $J_2,\ldots, J_W$ in 
$(J_{\nu}|I_{\nu})_{\mathbf{\Phi_{ }}\mathbf{\Psi_{ }} \cdots \mathbf{\Gamma_{ }}}$, 
with $M$ rows of types $(2)$ and $(3)$, but we may ignore them because of the uniqueness 
of $(J_{\nu}|I_{\nu})_{
\mathbf{\Phi_{ }}\mathbf{\Psi_{ }} \cdots \mathbf{\Gamma_{ }}}$. 
We label the sequence 
$\mathbf{\Phi_{ }}$ of integers associated to the rows of 
$(J_{\nu}|I_{\nu})_{\mathbf{\Phi_{ }}\mathbf{\Psi_{ }} \cdots \mathbf{\Gamma_{ }}}$  
as 
$\mathbf{\Phi_{}(\nu)}$ $\leq$ $\mathbf{\Psi_{}(\nu)}$ $\leq$ $\ldots$ $\leq$ $\mathbf{\Gamma_{}(\nu)}$. 
Let 
\[ 
\begin{aligned} 
U_{j,j+1} := \{ u_{j,j+1} &= (\I,\ldots, u_{ha_{\phi(1)}},\ldots, u_{ha_{\mu(1)}},\ldots, \I ): 
u_{ha_{\phi(1)}} = \ldots= u_{ha_{\mu(1)}} \mbox{ is the matrix with } \\ 
&\mbox{ 1 along the diagonal, the same variable $u$ in $(j,j+1)$-entry, and 0 elsewhere}  \}. \\ 
\end{aligned}
\] 
Applying $u_{j,j+1}\in U_{j,j+1}$ to $F$, we see that 
$g_1({}_{(\alpha)}a_{st})(J_1|I_1)_{\mathbf{\Phi_{ }}\mathbf{\Psi_{ }} 
\cdots \mathbf{\Gamma_{ }}}$ gives a term 
\[ 
u^M g_1({}_{(\alpha)}a_{st}) (J_1'|I_1)_{\mathbf{\Phi_{ }}\mathbf{\Psi_{ }} \cdots \mathbf{\Gamma_{ }}}, 
\]  
where $J_1'$ is obtained from $J_1$ by replacing each $j$ in rows of type 
$(2)$ and $(3)$ by $j+1$; 
furthermore, $(J_1'|I_1)_{\mathbf{\Phi_{ }}\mathbf{\Psi_{ }} \cdots \mathbf{\Gamma_{ }}}$ is block standard. 
Thus, the tableau $J_1'$ uniquely determines $J_1$ for the following reasons: 
first, all rows of type 
$(3)$ have been changed to rows ending with $j+1$; 
such rows must end with $j+1$ in $J_1$ by our choice of $j$. 
Otherwise, to obtain $J_1$, 
we change $j+1$ to $j$ in $M$ rows of $J_1'$ reading from top to bottom 
(while ignoring those rows which contain both $j$ and $j+1$).

Now, in $u_{j,j+1}.F$, any other occurrence of $(J_1'|I_1)_{\mathbf{\Phi_{}}\mathbf{\Psi_{ }} \cdots \mathbf{\Gamma_{}}}$ is with a coefficient 
$u^k g'({}_{(\alpha)}a_{st})$ 
where $k<M$ and $g'$ is a polynomial in ${}_{(\alpha)}a_{st}$ 
since $j$ was carefully chosen such that $j$ is the biggest integer satisfying $(\dagger)$. 
Since $F$ is a polynomial over a field of characteristic $0$, the coefficient of  
$(J_1'|I_1)_{\mathbf{\Phi_{ }}\mathbf{\Psi_{ }} \cdots \mathbf{\Gamma_{ }}}$   
depends on $u$.   
Thus, $F$ is not fixed by $\mathbb{U}_{\beta}$. 
This shows that $j$ cannot appear in a row of $J_{\nu}$ without $j+1$, which shows that $\mathbb{U}_{\beta}$-invariant polynomials must be of the form \eqref{eq:affine-dk-quiver-general-n-inv-poly}.  
Thus, if some of the terms of a 
$\mathbb{U}_{\beta}$-invariant polynomial $f$ are divisible by $x_{st}$ for some $s$ and $t$, then we write $f$ as 
\[ 
f(x_{st}, {}_{(\alpha)}a_{st}) = g(x_{st}, {}_{(\alpha)}a_{st}) + h({}_{(\alpha)}a_{st}),  
\] where all the terms in $g$ are divisible by $x_{st}$ for some $s$ and $t$ and $h$ is a polynomial in ${}_{(\alpha)}a_{st}$. 
By the first part of this proof, $h\in \mathbb{C}[\mathfrak{t}^{\oplus Q_1}]$ while $g$ must be of the form \eqref{eq:affine-dk-quiver-general-n-inv-poly}. 
It is immediate by the proof of Theorem~\ref{theorem:two-paths-max-quiver-semi-invariants}  
that if none of the terms in a  
$\mathbb{U}_{\beta}$-invariant polynomial are divisible by $x_{st}$ for all $1\leq s\leq n$ and $1\leq t\leq m$, 
then the polynomial is in $\mathbb{C}[\mathfrak{t}^{\oplus Q_1}]$. 
This concludes our proof. 
\end{proof}

\appendix
\bibliography{inv-and-semi-inv-of-all-filtered-quivers}   

\end{document}